\DeclareMathAlphabet{\mathscr}{T1}{pzc}{m}{it}
\titleformat{\section}[block]{\scshape\filcenter\Large}{\thesection.}{.5em}{}
\titleformat{\subsection}[block]{\bfseries\filcenter\large}{\thesubsection.}{.5em}{\medskip}
\titleformat{\subsubsection}[runin]{\bfseries}{\thesubsubsection.}{.5em}{}[.]
\titlespacing{\subsubsection}{0pt}{10pt}{*1}
\newtheoremstyle{ntheorem}%
	{\topsep}{\topsep}{\itshape}{0pt}{\bfseries}{.}{.5em}%
	{\thmnumber{#2.\hspace{.5em}}\thmname{#1}\thmnote{ (#3)}}
\newtheoremstyle{ndefinition}%
	{\topsep}{\topsep}{\normalfont}{0pt}{\bfseries}{.}{.5em}%
	{\thmnumber{#2.\hspace{.5em}}\thmname{#1}\thmnote{ (#3)}}
\newtheoremstyle{nremark}%
	{\topsep}{\topsep}{\normalfont}{0pt}{\itshape}{.}{.5em}%
	{\thmnumber{}\thmname{#1}\thmnote{ (#3)}}
\theoremstyle{ntheorem}
  	\newtheorem{theorem}[subsubsection]{Theorem}
  	\newtheorem{proposition}[subsubsection]{Proposition}
	\newtheorem{lemma}[subsubsection]{Lemma}
\theoremstyle{ndefinition}
	\newtheorem{definition}[subsubsection]{Definition}
	\newtheorem{notation}[subsubsection]{Notation}
	\newtheorem{example}[subsubsection]{Example}
\theoremstyle{nremark}	
	\newtheorem{remark}[subsubsection]{Remark}
	\edef\Drop@@{%
		\dimen@=#1\relax
		\dimen@=.5\dimen@
		\A@=-\sinDirection\dimen@
		\B@=\cosDirection\dimen@
		\setboxz@h{%
			\setbox2=\hbox{\kern3\A@\raise3\B@\copy\z@}%
			\dp2=\z@ \ht2=\z@ \wd2=\z@ \box2
			\setbox2=\hbox{\kern\A@\raise\B@\copy\z@}%
			\dp2=\z@ \ht2=\z@ \wd2=\z@ \box2
			\setbox2=\hbox{\kern-\A@\raise-\B@\copy\z@}%
			\dp2=\z@ \ht2=\z@ \wd2=\z@ \box2
			\setbox2=\hbox{\kern-3\A@\raise-3\B@ \noexpand\boxz@}%
			\dp2=\z@ \ht2=\z@ \wd2=\z@ \box2
		}%
		\ht\z@=\z@ \dp\z@=\z@ \wd\z@=\z@ \noexpand\styledboxz@
	}%
\xydef@\Tttip@{\kern2pt \vrule height2pt depth2pt width\z@
	\Tttip@@ \kern2pt \egroup
	\U@c=0pt \D@c=0pt \L@c=0pt \R@c=0pt \Edge@c={\circleEdge}%
	\def\Leftness@{.5}\def\Upness@{.5}%
	\def\Drop@@{\styledboxz@}\def\Connect@@{\straight@{\dottedSpread@\jot}}}
\xydef@\Tttip@@{%
	\dimen@=.25\dimen@
%	\A@=-\sinDirection\dimen@
 	\B@=\cosDirection\dimen@
	\setboxz@h\bgroup\reverseDirection@\line@ \wdz@=\z@ \ht\z@=\z@ \dp\z@=\z@
%	\kern\A@ \raise\B@ \boxz@ \kern\L@c
%	\kern-\L@c \boxz@ \kern\L@c
	{\vDirection@(1,-1)\xydashl@ \xyatipfont\char\DirectionChar}%
	{\vDirection@(1,+1)\xydashl@ \xybtipfont\char\DirectionChar}%
}
\xydef@\ar@form{
	\ifx \space@\next \expandafter\DN@\space{\xyFN@\ar@form}%
	\else\ifx ^\next \DN@ ^{\xyFN@\ar@style}\edef\arvariant@@{\string^}%
	\else\ifx _\next \DN@ _{\xyFN@\ar@style}\edef\arvariant@@{\string_}%
	\else\ifx 0\next \DN@ 0{\xyFN@\ar@style}\def\arvariant@@{0}%
	\else\ifx 1\next \DN@ 1{\xyFN@\ar@style}\def\arvariant@@{1}%
	\else\ifx 2\next \DN@ 2{\xyFN@\ar@style}\def\arvariant@@{2}%
	\else\ifx 3\next \DN@ 3{\xyFN@\ar@style}\def\arvariant@@{3}%
	\else\ifx 4\next \DN@ 4{\xyFN@\ar@style}\def\arvariant@@{4}%
	\else\ifx \bgroup\next \let\next@=\ar@style
	\else\ifx [\next \DN@[##1]{\ar@modifiers{[##1]}}%]
	\else\ifx *\next \DN@ *{\ar@modifiers}%
	\else\addLT@\ifx\next \let\next@=\ar@slide
	\else\ifx /\next \let\next@=\ar@curveslash
	\else\ifx (\next \let\next@=\ar@curveinout %)
	\else\addRQ@\ifx\next \addRQ@\DN@{\ar@curve@}%
	\else\addLQ@\ifx\next \addLQ@\DN@{\xyFN@\ar@curve}%
	\else\addDASH@\ifx\next \addDASH@\DN@{\defarstem@-\xyFN@\ar@}%
	\else\addEQ@\ifx\next \addEQ@\DN@{\def\arvariant@@{2}\defarstem@-\xyFN@\ar@}%
	\else\addDOT@\ifx\next \addDOT@\DN@{\defarstem@.\xyFN@\ar@}%
	\else\ifx :\next \DN@:{\def\arvariant@@{2}\defarstem@.\xyFN@\ar@}%
	\else\ifx ~\next \DN@~{\defarstem@~\xyFN@\ar@}%
	\else\ifx !\next \DN@!{\dasharstem@\xyFN@\ar@}%
	\else\ifx ?\next \DN@?{\ar@upsidedown\xyFN@\ar@}%
	\else \let\next@=\ar@error
	\fi\fi\fi\fi\fi\fi\fi\fi\fi\fi\fi\fi\fi\fi\fi\fi\fi\fi\fi\fi\fi\fi\fi \next@}
\newcommand{\fl}{\to}
\newcommand{\dfl}{\Rightarrow}
\newcommand{\tfl}{\Rrightarrow}
\newcommand{\ens}[1]{\left\{#1\right\}}
\newcommand{\cl}[1]{\overline{#1}}
\newcommand{\iar}[1]{\left\lfloor#1\right\rfloor}
\renewcommand{\phi}{\varphi}
\renewcommand{\epsilon}{\varepsilon}
\newcommand{\dr}{\partial}
\newcommand{\Br}{\EuScript{B}}
\newcommand{\Cr}{\EuScript{C}}
\newcommand{\Tr}{\EuScript{T}}
\newcommand{\commute}[2]{\ar@{}[#1]|-*+[o][F-]{\scriptscriptstyle{#2}}}
\newcommand{\Ct}[1]{\mathbf{C}{#1}}
\newcommand{\whisk}[1]{\mathbf{W}{#1}}
\newcommand{\Wk}[1]{\mathbf{W}{#1}}
\newcommand{\onecell}[3]{%
\xymatrix@1{
{\scriptstyle #2}
	\ar@<-0.2ex> [r] ^-{#1}
& {\scriptstyle #3}
}%
}
\newcommand{\threecell}[7]{%
\xymatrix@1{
{\scriptstyle #6}
	\ar@/^2ex/ [r] ^{#4} _{}="source"
	\ar@/_2ex/ [r] _{#5} ^{}="target"
	\ar@2 "source" ; "target" ^-{#2} 
& {\scriptstyle #7}
}
\:\overset{#1}{\tfl}\:
\xymatrix@1{
{\scriptstyle #6}
	\ar@/^2ex/ [r] ^{#4} _{}="source"
	\ar@/_2ex/ [r] _{#5} ^{}="target"
	\ar@2 "source" ; "target" ^-{#3} 
& {\scriptstyle #7}
}%
}
\definecolor{orange}{rgb}{1,0.55,0}
\newcommand{\pdf}[1]{\texorpdfstring{$#1$}{1}}
\newcommand{\As}{\mathbf{As}}
\newcommand{\tck}[1]{#1^{\top}}
\newcommand{\ab}[1]{#1_{\text{ab}}}
\newcommand{\abtck}[1]{\ab{\tck{#1}}}
\renewcommand{\tilde}[1]{\widetilde{#1}}
\newcommand{\rep}[1]{\widehat{#1}}
\DeclareMathOperator{\Aut}{Aut}
\newcommand{\Sph}{\mathbf{S}}
\begin{document}

\thispagestyle{empty}
\begin{center}
\textbf{\LARGE Identities among relations \\ for higher-dimensional rewriting systems}

\vspace{4mm}
\begin{tabular}{c c c}
\textbf{\large Yves Guiraud} &$\qquad\qquad$& \textbf{\large Philippe Malbos} \\
INRIA Nancy && Université Lyon 1 \\
LORIA && Institut Camille Jordan\\
yves.guiraud@loria.fr && malbos@math.univ-lyon1.fr
\end{tabular}
\end{center} 

\vspace{2mm}
\begin{em}
\hrule height 1.5pt

\medskip
\noindent \textbf{Abstract --} We generalize the notion of
identities among relations, well known for presentations of
groups, to presentations of n-categories by polygraphs. To
each polygraph, we associate a track n-category,
generalizing the notion of crossed module for groups, in
order to define the natural system of identities among
relations. We relate the facts that this natural system is
finitely generated and that the polygraph has finite
derivation type. 

\noindent \textbf{Support --} This work has been partially
supported by ANR Inval project (ANR-05-BLAN-0267).  

\medskip
\hrule height 1.5pt
\end{em}

%%%%%%%%%%%%%%%%%%%%%%%%%%%%%%%%%%%%%%%%%%%%%%%%%%%%%
%%%%%%%%%%%%%%%%%%%%%%%%%%%%%%%%%%%%%%%%%%%%%%%%%%%%%
\section*{Introduction}
%%%%%%%%%%%%%%%%%%%%%%%%%%%%%%%%%%%%%%%%%%%%%%%%%%%%%
%%%%%%%%%%%%%%%%%%%%%%%%%%%%%%%%%%%%%%%%%%%%%%%%%%%%%

The notion of \emph{identity among relations} originates in the work of Peiffer and Reidemeister, in combinatorial group theory~\cite{Peiffer49,Reidemeister49}. It is based on the notion of \emph{crossed module}, introduced by Whitehead, in algebraic topology, for the classification of homotopy $2$-types~\cite{Whitehead49a,Whitehead49b}. Crossed modules have also been defined for other algebraic structures than groups, such as commutative algebras~\cite{Porter87}, Lie algebras~\cite{KasselLoday82} or categories~\cite{Porter85}. Then Baues has introduced \emph{track $2$-categories}, which are categories enriched in groupoids, as a model of homotopy $2$-type~\cite{BauesDreckmann89,Baues91}, together with \emph{linear track extensions}, as generalizations of crossed modules~\cite{BauesMinian01}. 

There exist several interpretations of identities among relations for presentations of groups: as homological $2$-syzygies~\cite{BrownHuebschmann82}, as homotopical $2$-syzygies~\cite{Loday00} or as Igusa's pictures~\cite{Loday00,KapranovSaito99}. One can also interpret identities among relations as the critical pairs of a group presentation by a convergent word rewriting system~\cite{CremannsOtto96}. This point of view yields an algorithm based on Knuth-Bendix's completion procedure that computes a family of generators of the module of identities among relations~\cite{HeyworthWendsley03}. 

In this work, we define the notion of identities among relations for $n$-categories presented by higher-dimensional rewriting systems called \emph{polygraphs}~\cite{Burroni93}, using notions introduced in~\cite{GuiraudMalbos09}. Given an $n$-polygraph $\Sigma$, we consider the free \emph{track $n$-category} $\Sigma^{\top}$ generated by $\Sigma$, that is, the free $(n-1)$-category enriched in groupoid on $\Sigma$. We define \emph{identities among relations for $\Sigma$} as the elements of an \emph{abelian natural system} $\Pi(\Sigma)$ on the $n$-category $\cl{\Sigma}$ it presents. For that, we extend a result proved by Baues and Jibladze~\cite{BauesJibladze02} for the case $n=2$. 

\bigskip
\noindent{\bf Theorem~\ref{LinearTrackExtension}.}
\emph{A track $n$-category $\Tr$ is abelian if and only if there exists a unique (up to isomorphism) abelian natural system $\Pi(\Tr)$ on $\cl{\Tr}$ such that $\rep{\Pi(\Tr)}$ is isomorphic to $\Aut^{\Tr}$.}

\bigskip
\noindent We define $\Pi(\Sigma)$ as the natural system associated by that result to the abelianized track $n$-category $\abtck{\Sigma}$. In Section~\ref{identitiesAmongRelations}, we give an explicit description of the natural system $\Pi(\Sigma)$. \\

\noindent
Then, in Section~\ref{GeneratingIdentitiesAmongRelations}, we interpret generators of $\Pi(\Sigma)$ as elements of a \emph{homotopy basis} of the track $n$-category $\tck{\Sigma}$, see~\cite{GuiraudMalbos09}. More precisely, we prove:

\bigskip
\noindent{\bf Theorem~\ref{mainTheorem2}.}
\emph{If an $n$-polygraph $\Sigma$ has finite derivation type then the natural system $\Pi(\Sigma)$ is finitely generated.}

\bigskip
\noindent To prove this result, we give a way to compute generators of $\Pi(\Sigma)$ from the critical pairs of a convergent polygraph $\Sigma$. Indeed, there exists, for every critical branching $(f,g)$ of $\Sigma$, a confluence diagram:
\[
\xymatrix@dr {
\cdot
	\ar [r] ^-{g} 
	\ar [d] _-{f}
& \cdot
	\ar [d] ^-{k}
\\
\cdot
	\ar [r] _-{h}
&
\cdot
}
\]
An $(n+1)$-cell filling such a diagram is called a \emph{generating confluence} of $\Sigma$. It is proved in~\cite{GuiraudMalbos09} that the generating confluences of $\Sigma$ form a homotopy basis of $\tck{\Sigma}$. We show here that they also form a generating set for the natural system $\Pi(\Sigma)$ of identities among relations. 

%%%%%%%%%%%%%%%%%%%%%%%%%%%%%%%%%%%%%%%%%%%
%%%%%%%%%%%%%%%%%%%%%%%%%%%%%%%%%%%%%%%%%%%
\section{Preliminaries}
\label{sectionDefinitions}
%%%%%%%%%%%%%%%%%%%%%%%%%%%%%%%%%%%%%%%%%%%
%%%%%%%%%%%%%%%%%%%%%%%%%%%%%%%%%%%%%%%%%%%

In this section, we recall several notions from~\cite{GuiraudMalbos09}: presentations of $n$-categories by polygraphs~(\ref{subsectionCat}), rewriting properties of polygraphs~(\ref{subsectionRewriting}), track $n$-categories and homotopy bases~(\ref{subsectionHomotopy}). 

%%%%%%%%%%%%%%%%%%%%%%%%%%%%%%%%%%%%%%%%%%%
\subsection{Higher-dimensional categories and polygraphs}
\label{subsectionCat}
%%%%%%%%%%%%%%%%%%%%%%%%%%%%%%%%%%%%%%%%%%%

We fix an $n$-category $\Cr$ throughout this section.

%%%%%%%%%%%%%%%%%%%%%%%%%%%%%%%%%%%%%%%%%%%
\subsubsection{Notations}

We denote by $\Cr_k$ the set (and the $k$-category) of $k$-cells of~$\Cr$. 
If $f$ is in $\Cr_k$, then $s_i(f)$ and $t_i(f)$ respectively denote the $i$-source and $i$-target of $f$; 
we drop the suffix~$i$ when $i=k-1$. 
The source and target maps satisfy the \emph{globular relations}: 
\begin{equation}
\label{globular relations}
s_i s_{i+1} \:=\: s_i t_{i+1} 
\qquad\text{and}\qquad
t_i s_{i+1} \:=\: t_i t_{i+1}.
\end{equation}
If $f$ and $g$ are $i$-composable $k$-cells, that is when $t_i(f)=s_i(g)$, we denote by $f\star_i g$ their $i$-composite $k$-cell. We also write $fg$ instead of $f\star_0 g$. The compositions satisfy the \emph{exchange relations} given, for every $i\neq j$ and every possible cells $f$, $g$, $h$ and $k$, by: 
\begin{equation}
\label{exchange relation}
(f \star_i g) \star_j (h \star_i k) \:=\: (f \star_j h) \star_i (g \star_j k).
\end{equation}
If $f$ is a $k$-cell, we denote by $1_f$ its identity $(k+1)$-cell and, by abuse, all the higher-dimensional identity cells it generates. When $1_f$ is composed with cells of dimension $k+1$ or higher, we simply denote it by~$f$. A $k$-cell $f$ with $s(f)=t(f)=u$ is called a \emph{closed} $k$-cell with \emph{base point} $u$. 

%%%%%%%%%%%%%%%%%%%%%%%%%%%%%%%%%%%%%%%%%%%
\subsubsection{Spheres}

Let $\Cr$ be an $n$-category and let $k\in\ens{0,\dots,n}$. A \emph{$k$-sphere of $\Cr$} is a pair $\gamma=(f,g)$ of parallel $k$-cells of $\Cr$, that is, with $s(f)=s(g)$ and $t(f)=t(g)$; we call $f$ the \emph{source} of $\gamma$ and $g$ its \emph{target}. We denote by $\Sph\Cr$ the set of $n$-spheres of $\Cr$. An $n$-category is \emph{aspherical} when all of its $n$-spheres have shape $(f,f)$.

%%%%%%%%%%%%%%%%%%%%%%%%%%%%%%%%%%%%%%%%%%%
\subsubsection{Cellular extensions}

A \emph{cellular extension of $\Cr$} is a pair $\Gamma=(\Gamma_{n+1},\dr)$ made of a set $\Gamma_{n+1}$ and a map $\dr:\Gamma_{n+1}\fl\Sph\Cr$. 
By considering all the formal compositions of elements of $\Gamma$, seen as $(n+1)$-cells with source and target in $\Cr$, one builds the \emph{free $(n+1)$-category generated by $\Gamma$}, denoted by $\Cr[\Gamma]$. 

The \emph{quotient of $\Cr$ by $\Gamma$}, denoted by $\Cr/\Gamma$, is the $n$-category one gets from $\Cr$ by identification of $n$-cells~$s(\gamma)$ and $t(\gamma)$, for every $n$-sphere $\gamma$ of $\Gamma$. We usually denote by $\cl{f}$ the equivalence class of an $n$-cell~$f$ of~$\Cr$ in $\Cr/\Gamma$. We write $f\equiv_{\Gamma}g$ when $\cl{f}=\cl{g}$ holds. 

%%%%%%%%%%%%%%%%%%%%%%%%%%%%%%%%%%%%%%%%%%%
\subsubsection{Polygraphs}

We define $n$-polygraphs and free $n$-categories by induction on $n$. A \emph{$1$-polygraph} is a graph, with the usual notion of free category. 

An \emph{$(n+1)$-polygraph} is a pair $\Sigma=(\Sigma_n,\Sigma_{n+1})$ made of an $n$-polygraph $\Sigma_n$ and a cellular extension~$\Sigma_{n+1}$ of the free $n$-category generated by $\Sigma_n$. The \emph{free $(n+1)$-category generated by $\Sigma$} and the \emph{$n$-category presented by $\Sigma$} are respectively denoted by $\Sigma^*$ and $\cl{\Sigma}$ and defined by:
\[
\Sigma^* \:=\: \Sigma_n^*[\Sigma_{n+1}]
\qquad\text{and}\qquad
\cl{\Sigma} \:=\: \Sigma_n^*/\Sigma_{n+1}.
\]
An $n$-polygraph $\Sigma$ is \emph{finite} when each set $\Sigma_k$ is finite, $0\leq k\leq n$. Two $n$-polygraphs whose presented $(n-1)$-categories are isomorphic are \emph{Tietze-equivalent}. A property on $n$-polygraphs that is preserved up to Tietze-equivalence is  \emph{Tietze-invariant}.

An $n$-category $\Cr$ is \emph{presented} by an $(n+1)$-polygraph $\Sigma$ when it is isomorphic to $\cl{\Sigma}$. It is \emph{finitely generated} when it is presented by an $(n+1)$-polygraph $\Sigma$ whose underlying $n$-polygraph $\Sigma_n$ is finite. It is \emph{finitely presented} when it is presented by a finite $(n+1)$-polygraph.

%%%%%%%%%%%%%%%%%%%%%%%%%%%%%%%%%%%%%%%%%%%
\begin{example}
\deftwocell[black]{mu : 2 -> 1}
\deftwocell[gray]{alpha : 3 -> 1}
\deftwocell[white]{aleph : 4 -> 1}
Let us consider the monoid $\As=\ens{a_0,a_1}$ with unit $a_0$ and product $a_1a_1=a_1$. We see $\As$ as a ($1$-)category with one $0$-cell $a_0$ and one non-degenerate $1$-cell $a_1:a_0\fl a_0$. As such, it is presented by the $2$-polygraph $\Sigma_2$ with one $0$-cell $a_0$, one $1$-cell $a_1:a_0\fl a_0$ and one $2$-cell $a_2:a_1a_1\dfl a_1$. Thus $\As$ is finitely generated and presented. In what follows, we use graphical notations for those cells, where the $1$-cell $a_1$ is pictured as a vertical ``string''~\:\twocell{1}\: and the $2$-cell $a_2$ as~\twocell{mu}. 
\end{example}

%%%%%%%%%%%%%%%%%%%%%%%%%%%%%%%%%%%%%%%%%%%
\subsubsection{Contexts and whiskers}

A \emph{context of $\Cr$} is a pair $(x,C)$ made of an $(n-1)$-sphere $x$ of $\Cr$ and an $n$-cell~$C$ in $\Cr[x]$ such that $C$ contains exactly one occurrence of $x$. We denote by $C[x]$, or simply by $C$, such a context. If $f$ is an $n$-cell which is parallel to $x$, then $C[f]$ is the $n$-cell of $\Cr$ one gets by replacing $x$ by $f$ in $C$. 

Every context $C$ of $\Cr$ has a decomposition
$$
C \:=\: 
	f_n \star_{n-1} ( f_{n-1} \star_{n-2} ( \cdots 
	\star_1 f_1 x g_1 \star_1 
	\cdots ) \star_{n-2} g_{n-1} ) \star_{n-1} g_n, 
$$
where, for every $k$ in $\ens{1,\dots,n}$, $f_k$ and $g_k$ are $k$-cells of $\Cr$. A \emph{whisker of $\Cr$} is a context that admits such a decomposition with $f_n$ and $g_n$ being identities. Every context $C$ of $\Cr_{n-1}$ yields a whisker of $\Cr$ such that $C[f\star_{n-1}g] =C[f] \star_{n-1} C[g]$ holds.

If $\Gamma$ is a cellular extension of $\Cr$, then every non-degenerate $(n+1)$-cell $f$ of $\Cr[\Gamma]$ has a decomposition 
$$
f \:=\: C_1[\phi_1] \star_n \cdots \star_n C_k[\phi_k],
$$
with $k\geq 1$ and, for every $i$ in $\ens{1,\dots,k}$, $\phi_i$ in $\Gamma$ and $C_i$ a context of $\Cr$.

The \emph{category of contexts of $\Cr$} is denoted by $\Ct{\Cr}$, its objects are the $n$-cells of $\Cr$ and its morphisms from $f$ to $g$ are the contexts $C$ of $\Cr$ such that $C[f]=g$ holds. We denote by $\whisk{\Cr}$ the subcategory of~$\Ct{\Cr}$ with the same objects and with whiskers as morphisms. 

%%%%%%%%%%%%%%%%%%%%%%%%%%%%%%%%%%%%%%%%%%%
\subsubsection{Natural systems}
\label{NaturalSystems}

A \emph{natural system on $\Cr$} is a functor $D$ from $\Ct{\Cr}$ to the category of groups. We denote by $D_u$ and $D_C$ the images of an $n$-cell~$u$ and of a context $C$ of $\Cr$ by the functor $D$. When no confusion arise, we write $C[a]$ instead of $D_C(a)$. A natural system $D$ on $\Cr$ is \emph{abelian} when $D_u$ is an abelian group for every $n$-cell~$u$.

%%%%%%%%%%%%%%%%%%%%%%%%%%%%%%%%%%%%%%%%%%%
\subsection{Rewriting properties of polygraphs}
\label{subsectionRewriting}
%%%%%%%%%%%%%%%%%%%%%%%%%%%%%%%%%%%%%%%%%%%

We fix an $(n+1)$-polygraph $\Sigma$ throughout this section.

%%%%%%%%%%%%%%%%%%%%%%%%%%%%%%%%%%%%%%%%%%%
\subsubsection{Termination}

One says that an $n$-cell $u$ of $\Sigma_n^*$ \emph{reduces} into an $n$-cell $v$ when $\Sigma^*$ contains a non-identity $(n+1)$-cell with source $u$ and target $v$. One says that $u$ is a \emph{normal form} when it does not reduce into an $n$-cell. A \emph{normal form of $u$} is an $n$-cell $v$ which is a normal form and such that $u$ reduces into $v$. A \emph{reduction sequence} is a countable family $(u_n)_{n\in I}$ of $n$-cells such that each $u_n$ reduces into~$u_{n+1}$; it is \emph{finite} or \emph{infinite} when the indexing set $I$ is.

One says that $\Sigma$ \emph{terminates} when it does not generate any infinite reduction sequence. In that case, every $n$-cell has at least one normal form and one can use \emph{Noetherian induction}: one can prove properties on $n$-cells by induction on the length of reduction sequences.

%%%%%%%%%%%%%%%%%%%%%%%%%%%%%%%%%%%%%%%%%%%
\subsubsection{Confluence}

A \emph{branching} (resp. \emph{confluence}) is a pair $(f,g)$ of $(n+1)$-cells of $\Sigma^*$ with same source (resp. target), considered up to permutation. A branching $(f,g)$ is \emph{local} when $f$ and $g$ contain exactly one generating $(n+1)$-cell of $\Sigma$. It is \emph{confluent} when there exists a confluence $(f',g')$ with $t(f)=s(f')$ and $t(g)=s(g')$. A local branching $(f,g)$ is \emph{critical} when the common source of $f$ and $g$ is a minimal overlapping of the sources of the $(n+1)$-cells contained in $f$ and $g$. A \emph{confluence diagram} of a branching $(f,g)$ is an $(n+1)$-sphere with shape $(f\star_n f',g\star_n g')$, where $(f',g')$ is a confluence. A confluence diagram of a critical branching is called a \emph{generating confluence of $\Sigma$}. 

One says that $\Sigma$ is \emph{(locally) confluent} when each of its (local) branchings is confluent. A local branching $(f,g)$ is \emph{critical} when the common source of $f$ and $g$ is a minimal overlapping of the sources of the generating $(n+1)$-cells of $f$ and $g$. In a confluent $(n+1)$-polygraph, every $n$-cell has at most one normal form. For terminating $(n+1)$-polygraphs, Newman's lemma ensures that local confluence and confluence are equivalent properties~\cite{Newman42}.  

%%%%%%%%%%%%%%%%%%%%%%%%%%%%%%%%%%%%%%%%%%%
\subsubsection{Convergence}

One says that $\Sigma$ is \emph{convergent} when it terminates and it is confluent. In that case, every $n$-cell $u$ has a unique normal form, denoted by $\rep{u}$. Moreover, we have $u\equiv_{\Sigma_{n+1}}v$ if and only if $\rep{u}=\rep{v}$. As a consequence, a finite and convergent $(n+1)$-polygraph yields a syntax for the $n$-cells of the category it presents, together with a decision procedure for the corresponding word problem.

%%%%%%%%%%%%%%%%%%%%%%%%%%%%%%%%%%%%%%%%%%%
\begin{example}
The $2$-polygraph $\Sigma_2=(a_0,a_1,a_2)$ presenting $\As$ is convergent and has exactly one critical pair $(a_2a_1,a_1a_2)$, with corresponding generating confluence~$a_3$:
\[
\xymatrix@dr {
{ a_1a_1a_1}
	\ar@2 [r] ^-{a_1a_2} 
	\ar@2 [d] _-{a_2a_1}
& { a_1a_1}
	\ar@2 [d] ^-{a_2}
\\
{ a_1a_1}
	\ar@2 [r] _-{a_2}
	\ar@3 [ur] ^-{a_3}
& { a_1}
}
\]
Alternatively, this $3$-cell $a_3$ can be pictured as follows:
\[
\xymatrix@1{
{ \twocell{(mu *0 1) *1 mu} }
	\ar@3[r] ^-{\twocell{alpha}}
& { \twocell{(1 *0 mu) *1 mu} }
}
\]
In turn, the $3$-polygraph $\Sigma_3=(a_0,a_1,a_2,a_3)$, which is a part of a presentation of the theory of monoids, is convergent and has exactly one critical pair, with corresponding generating confluence~$a_4$: 
\[
\xymatrix@C=7.5pt@R=15pt{
&& { a_1a_1a_1a_1}
	\ar@2 [dll] _-{a_2a_1a_1}
	\ar@2 [dd] |-{a_1a_2a_1} _(0.75){\hole}="t1" ^(0.75){\hole}="s2"
	\ar@2 [drr] ^-{a_1a_1a_2}
&&&&&&&&&& { a_1a_1a_1a_1}
	\ar@2 [dll] _-{a_2a_1a_1}
	\ar@2 [drr] ^-{a_1a_1a_2}
	\ar@{} [dd] |-{\copyright}	
\\
{ a_1a_1a_1} 
	\ar@2[dd] _-{a_2a_1} ^(0.25){\hole}="s1"
&&&& { a_1a_1a_1}
	\ar@2 [dd] ^-{a_1a_2} _(0.25){\hole}="t2"
&&&&&& { a_1a_1a_1}
	\ar@2[dd] _-{a_2a_1} ^(0.734){\hole}="s3"
	\ar@2[drr] |-{a_1a_2}
&&&& { a_1a_1a_1}
	\ar@2 [dd] ^-{a_1a_2} _(0.734){\hole}="t4"
	\ar@2 [dll] |-{a_2a_1}
\\
&& { a_1a_1a_1}
	\ar@2 [dll] |-{a_2a_1}
	\ar@2 [drr] |-{a_1a_2}
&&&& \strut
	\ar@4 [rr] ^-{a_4}
&&&&&& { a_1a_1}
	\ar@2 [dd] |-{a_2} _(0.26){\hole}="t3" ^(0.26){\hole}="s4"
\\
{ a_1a_1}
	\ar@2 [drr] _-{a_2}
& \strut 
	\ar@3 [rr] ^-{a_3}
&&& { a_1a_1}
	\ar@2 [dll] ^-{a_2}
&&&&&& { a_1a_1} 
	\ar@2 [drr] _-{a_2}
&&&& { a_1a_1}
	\ar@2 [dll] ^-{a_2}
\\
&& { a_1}
&&&&&&&&&& { a_1}
\ar@3 "s1";"t1" ^-{a_3 a_1}
\ar@3 "s2";"t2" ^-{a_1a_3}
\ar@3 "s3";"t3" ^-{a_3}
\ar@3 "s4";"t4" ^-{a_3}
}
\]
In fact, this $4$-cell $a_4$ is Mac Lane's pentagon~\cite{GuiraudMalbos09}: 
\[
\xymatrix@R=5pt{
& {\twocell{ (1 *0 mu *0 1) *1 (mu *0 1) *1 mu }}
	\ar@2 [rr] ^-{\twocell{ (1 *0 mu *0 1) *1 alpha}} 
&& {\twocell{ (1 *0 mu *0 1) *1 (1 *0 mu) *1 mu}}
	\ar@2 [ddr] ^-{\twocell{ (1 *0 alpha) *1 mu}}
\\
&& \ar@4 [dd] ^-{\twocell{aleph}}
\\
{\twocell{(mu *0 2) *1 (mu *0 1) *1 mu}}
	\ar@2 [uur] ^-{\twocell{ (alpha *0 1) *1 mu}}
	\ar@2 [ddrr] _-{\twocell{ (mu *0 2) *1 alpha }}
&&&& {\twocell{(2 *0 mu) *1 (1 *0 mu) *1 mu}}
\\
&& \strut
\\
&& {\twocell{ (mu *0 mu) *1 mu }}
	\ar@2 [uurr] _-{\twocell{ (2 *0 mu) *1 alpha}}
}
\]
\end{example}

%%%%%%%%%%%%%%%%%%%%%%%%%%%%%%%%%%%%%%%%%%%
\subsection{Track \pdf{n}-categories and homotopy bases}
\label{subsectionHomotopy}
%%%%%%%%%%%%%%%%%%%%%%%%%%%%%%%%%%%%%%%%%%%

%%%%%%%%%%%%%%%%%%%%%%%%%%%%%%%%%%%%%%%%%%%
\subsubsection{Track \pdf{n}-categories}

A \emph{track $n$-category} is an $n$-category $\Tr$ whose $n$-cells are invertible, that is, for $n\geq 2$, an $(n-1)$-category enriched in groupoid. In a track $n$-category, we denote by $f^-$ the inverse of the $n$-cell~$f$. A track $n$-category is \emph{acyclic} when, for every $(n-1)$-sphere $(u,v)$, there exists an $n$-cell~$f$ with source $u$ and target $v$. 

The \emph{$n$-category presented} by a track $(n+1)$-category $\Tr$ is the $n$-category $\cl{\Tr}=\Tr_n/\Tr_{n+1}$, where~$\Tr_{n+1}$ is seen as a cellular extension of $\Tr_n$. Two track $(n+1)$-categories are \emph{Tietze-equivalent} if the $n$-categories they present are isomorphic. Given an $n$-category $\Cr$ and a cellular extension $\Gamma$ of $\Cr$, the \emph{track $(n+1)$-category generated by $\Gamma$} is denoted by~$\Cr(\Gamma)$ and defined as follows: 
\[
\Cr(\Gamma) \:=\: \Cr \left[ \Gamma,\; \Gamma^- \right] \big/ \; \text{Inv}(\Gamma)
\]
where $\Gamma^-$ contains the same $(n+1)$-cells as $\Gamma$, with source and target reversed, and $\text{Inv}(\Gamma)$ is made of the $(n+2)$-cells $(\gamma\star_n\gamma^-,1_{s\gamma})$ and $(\gamma^-\star_n\gamma, 1_{t\gamma})$, where $\gamma$ ranges over $\Gamma$. Let us note that, when $f$ and~$g$ are $n$-cells of $\Cr$, we have $f\equiv_{\Gamma}g$ if and only if there exists an $(n+1)$-cell with source $f$ and target~$g$ in~$\Cr(\Gamma)$. When $\Sigma$ is an $(n+1)$-polygraph, one writes $\tck{\Sigma}$ instead of $\Sigma_n^*(\Sigma_{n+1})$.

%%%%%%%%%%%%%%%%%%%%%%%%%%%%%%%%%%%%%%%%%%%
\subsubsection{Homotopy bases}

Let $\Cr$ be an $n$-category. A \emph{homotopy basis of $\Cr$} is a cellular extension $\Gamma$ of $\Cr$ such that the track $(n+1)$-category $\Cr(\Gamma)$ is acyclic or, equivalently, when the quotient $n$-category $\Cr/\Gamma$ is aspherical or, again equivalently, when every sphere $(f,g)$ of $\Cr$ satisfies $f\equiv_{\Gamma}g$. 

\begin{lemma}[Squier's fundamental confluence lemma]
\label{SquierConfluenceLemma}
Let $\Sigma$ be a convergent $n$-polygraph. The generating confluences of $\Sigma$ form a homotopy basis of $\Sigma^\top$.
\end{lemma}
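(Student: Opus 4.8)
The plan is to deduce the lemma from the characterisation of homotopy bases via $\equiv_\Gamma$: write $\Gamma$ for the cellular extension of the track $n$-category $\Sigma^\top=\Sigma_{n-1}^*(\Sigma_n)$ obtained by choosing one generating confluence for each critical branching of $\Sigma$ — such choices exist because $\Sigma$, being convergent, has all its critical branchings confluent, and replacing $\Gamma$ by the set of all generating confluences only enlarges $\equiv_\Gamma$. It then suffices to prove that every $n$-sphere $(a,b)$ of $\Sigma^\top$ satisfies $a\equiv_\Gamma b$, i.e. that $a$ and $b$ are connected by an $(n+1)$-cell of $\Sigma^\top(\Gamma)$. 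Throughout I would use freely that $\equiv_\Gamma$ is a congruence closed under $\star_{n-1}$-whiskering, that $\Sigma^\top$ is a groupoid in dimension $n$, and that convergence of $\Sigma$ supplies for each $(n-1)$-cell $u$ a unique normal form $\rep u$, reached by some $n$-cell of $\Sigma^*$, together with the right to argue by Noetherian induction on $u$.

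The core is the following claim $(C)$, which I would prove by Noetherian induction on $u$: for every $(n-1)$-cell $u$, any two $n$-cells $f,g\colon u\to\rep u$ of $\Sigma^*$ satisfy $f\equiv_\Gamma g$. If $u$ is a normal form there is nothing to prove, since then $f=g=1_u$. Otherwise I split off first rewriting steps, $f=e\star_{n-1}f'$ and $g=e'\star_{n-1}g'$, with $e\colon u\to u_1$ and $e'\colon u\to u_2$ whiskers of generating $n$-cells of $\Sigma$ and $f'\colon u_1\to\rep u$, $g'\colon u_2\to\rep u$ (note $\rep{u_1}=\rep{u_2}=\rep u$). If $e=e'$ the induction hypothesis at $u_1$ gives $f'\equiv_\Gamma g'$, hence $f\equiv_\Gamma g$. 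If $e\neq e'$, then $(e,e')$ is a non-trivial local branching, and here I would invoke the classification of local branchings of an $n$-polygraph from~\cite{GuiraudMalbos09}: either it is a Peiffer branching, whose canonical confluence $(h,h')$, obtained by reducing the two independent redexes in the opposite order, satisfies $e\star_{n-1}h=e'\star_{n-1}h'$ already in $\Sigma^*$ by the exchange relations; or it is overlapping, hence, after removing an appropriate whisker $C$, of the form $(C[b_1],C[b_2])$ for a critical branching $b=(b_1,b_2)$, and then the chosen generating confluence $A\in\Gamma$ of $b$, with $\dr A=(b_1\star_{n-1}b_1',\,b_2\star_{n-1}b_2')$, yields the $(n+1)$-cell $C[A]$ of $\Sigma^\top(\Gamma)$ from $e\star_{n-1}h$ to $e'\star_{n-1}h'$, where $h:=C[b_1']$ and $h':=C[b_2']$. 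In either case $(h\colon u_1\to w,\,h'\colon u_2\to w)$ is a confluence of $(e,e')$ with $e\star_{n-1}h\equiv_\Gamma e'\star_{n-1}h'$; picking any $n$-cell $k\colon w\to\rep u$ of $\Sigma^*$ and applying the induction hypothesis at $u_1$ and $u_2$ (so $f'\equiv_\Gamma h\star_{n-1}k$ and $h'\star_{n-1}k\equiv_\Gamma g'$), I conclude $f=e\star_{n-1}f'\equiv_\Gamma e\star_{n-1}h\star_{n-1}k\equiv_\Gamma e'\star_{n-1}h'\star_{n-1}k\equiv_\Gamma e'\star_{n-1}g'=g$.

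To pass from $(C)$ to all $n$-spheres of $\Sigma^\top$, I would fix for each $(n-1)$-cell $u$ a normalisation $n$-cell $\sigma_u\colon u\to\rep u$ in $\Sigma^*$ (with $\sigma_u=1_u$ when $u$ is normal) and establish claim $(A)$: every $n$-cell $c\colon u\to v$ of $\Sigma^\top$ satisfies $c\star_{n-1}\sigma_v\equiv_\Gamma\sigma_u$. This I would prove by induction on the length of a decomposition of $c$ into whiskered generating $n$-cells of $\Sigma$ and their formal inverses, peeling off the last factor $d\colon w\to v$: if $d$ is a genuine rewriting step, then $d\star_{n-1}\sigma_v$ and $\sigma_w$ are parallel $\Sigma^*$-cells from $w$ to $\rep w=\rep v$, so $d\star_{n-1}\sigma_v\equiv_\Gamma\sigma_w$ by $(C)$; if $d=\delta^-$ for a rewriting step $\delta\colon v\to w$, then $(C)$ gives $\delta\star_{n-1}\sigma_w\equiv_\Gamma\sigma_v$, and whiskering on the left by $\delta^-$ (using $\delta^-\star_{n-1}\delta=1_w$) again yields $d\star_{n-1}\sigma_v\equiv_\Gamma\sigma_w$; in both cases the induction hypothesis on the remaining prefix finishes the step. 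Finally, for an $n$-sphere $(a,b)$ with $a,b\colon u\to v$, claim $(A)$ gives $a\star_{n-1}\sigma_v\equiv_\Gamma\sigma_u\equiv_\Gamma b\star_{n-1}\sigma_v$, and whiskering on the right by $\sigma_v^-$ (using $\sigma_v\star_{n-1}\sigma_v^-=1_v$) yields $a\equiv_\Gamma b$; hence $\Sigma^\top(\Gamma)$ is acyclic and $\Gamma$ is a homotopy basis of $\Sigma^\top$.

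I expect the main obstacle to be the branching-classification input used inside $(C)$: one needs the structural facts, established in~\cite{GuiraudMalbos09}, that a non-trivial local branching of an $n$-polygraph is either a Peiffer branching that closes up on the nose via the exchange relations, or, up to a whisker, a critical branching. Granting those, the remainder is the nested Noetherian/length inductions together with the routine but somewhat delicate bookkeeping of $\star_{n-1}$-whiskerings and of the groupoid inverses in $\Sigma^\top$.
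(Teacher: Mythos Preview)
The paper itself gives no proof here: it simply cites~\cite{GuiraudMalbos09} for the complete argument. Your outline is correct and is exactly the strategy followed in that reference---Noetherian induction to identify any two positive reductions to normal form modulo $\Gamma$ via the trichotomy of local branchings (trivial, Peiffer, overlapping-hence-whisker-of-critical), then a length induction over generators and their inverses to handle arbitrary zig-zags in~$\Sigma^\top$.
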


\begin{remark}
A complete proof of Lemma~\ref{SquierConfluenceLemma} is given in~\cite{GuiraudMalbos09}. Squier has proved the same result for presentations of monoids by word rewriting systems~\cite{Squier87,Squier94}. When formulated in terms of homotopy bases, Squier's result is a subcase of the case $n=2$ of Lemma~\ref{SquierConfluenceLemma}.
\end{remark}

\begin{example}
The $2$-polygraph $\Sigma_2=(a_0,a_1,a_2)$ presenting $\As$ has exactly one generating confluence~$a_3$ and, thus, this $3$-cell forms a homotopy basis of the track $2$-category $\tck{\Sigma_2}$. The $3$-polygraph $\Sigma_3=(a_0,a_1,a_2,a_3)$ also has exactly one generating confluence $a_4$, with Mac Lane's pentagon as shape, which forms a homotopy basis of the track $3$-category $\tck{\Sigma}_3$. 

The resulting $4$-polygraph $\Sigma_4=(a_0,a_1,a_2,a_3,a_4)$ is a part of a presentation of the theory of monoidal categories. In~\cite{GuiraudMalbos09}, Mac Lane's coherence theorem is reformulated in terms of homotopy bases and proved by an application of Lemma~\ref{SquierConfluenceLemma} to a convergent $3$-polygraph containing $\Sigma_3$.
\end{example}

%%%%%%%%%%%%%%%%%%%%%%%%%%%%%%%%%%%%%%%%%%%%%%%%
\begin{lemma}
\label{LemmaHomotopiesBoucles}
Let $\Tr$ be a track $n$-category and let $\Br$ be a family of closed $n$-cells of $\Tr$. The following
assertions are equivalent: 
\begin{enumerate}
\item The cellular extension $\tilde{\Br} = \ens{ \tilde{\beta}:\beta\fl 1_{s\beta}, \:\beta\in\Br }$ is a homotopy basis of $\Tr$.
\item Every closed $n$-cell $f$ in $\Tr$ can be written
\begin{equation} \label{HomotopyBaseTDFLemmaEq}
f \:=\: \left( g_1 \star_{n-1} C_1\left[ \beta_1^{\epsilon_1} \right] \star_{n-1} g_1^- \right) 
	\:\star_{n-1}\: \cdots \:\star_{n-1}\: 
	\left( g_k \star_{n-1} C_k \left[ \beta_k^{\epsilon_k} \right] \star_{n-1} g_k^- \right)
\end{equation}
where, for every $i\in\ens{1,\dots,k}$, we have $\beta_i\in\Br$, $\epsilon_i\in\ens{-,+}$, $C_i\in\Wk{\Tr}$ and $g_i\in\Tr_n$.
\end{enumerate}
\end{lemma}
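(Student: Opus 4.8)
The plan is to prove both implications by a single telescoping computation, after reducing acyclicity of $\Tr(\tilde{\Br})$ to a statement about closed $n$-cells. First I would record the following reformulation: since $\Tr(\tilde{\Br})$ is a track $(n+1)$-category, the cellular extension $\tilde{\Br}$ is a homotopy basis of $\Tr$ if and only if every closed $n$-cell $f$ of $\Tr$ admits an $(n+1)$-cell $f\to 1_{sf}$ in $\Tr(\tilde{\Br})$. One direction is the characterisation of homotopy bases recalled in~\ref{subsectionHomotopy}, applied to the $n$-sphere $(f,1_{sf})$. For the converse, given an $n$-sphere $(u,v)$ of $\Tr$, the $n$-cell $u\star_{n-1}v^-$ is closed, and composing an $(n+1)$-cell $u\star_{n-1}v^-\to 1_{su}$ with the identity $(n+1)$-cell on $v$ along $\star_{n-1}$, using $v^-\star_{n-1}v=1$ in the track $n$-category $\Tr$, yields an $(n+1)$-cell $u\to v$ in $\Tr(\tilde{\Br})$.

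For $(1)\Rightarrow(2)$, I would fix a closed $n$-cell $f$, choose an $(n+1)$-cell $f\to 1_{sf}$ in $\Tr(\tilde{\Br})$, and lift it to an $(n+1)$-cell $F$ of the free $(n+1)$-category $\Tr[\tilde{\Br},\tilde{\Br}^-]$ with source $f$ and target $1_{sf}$. If $F$ is an identity, then $f=1_{sf}$ and the empty product works; otherwise $F$ is non-degenerate, and the decomposition of such cells recalled in~\ref{subsectionCat} gives $F=D_1[\psi_1]\star_n\cdots\star_n D_k[\psi_k]$ where each $\psi_i$ is a generator in $\tilde{\Br}$ or $\tilde{\Br}^-$ and each $D_i$ is a context of $\Tr$. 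The successive sources and targets $u_0=f,u_1,\dots,u_k=1_{sf}$ of the cells $D_i[\psi_i]$ are all parallel to $f$, hence closed with base point $sf$, so one can telescope in the track $n$-category $\Tr$:
\[
f \:=\: (u_0\star_{n-1}u_1^-)\star_{n-1}(u_1\star_{n-1}u_2^-)\star_{n-1}\cdots\star_{n-1}(u_{k-1}\star_{n-1}u_k^-).
\]
It remains to identify each factor: writing $D_i=g_i\star_{n-1}C_i\star_{n-1}h_i$ with $C_i\in\Wk{\Tr}$, one checks that $C_i[1_{s\beta_i}]$ is an identity, so $D_i[1_{s\beta_i}]=g_i\star_{n-1}h_i$, and after cancellation in $\Tr$ this gives $u_{i-1}\star_{n-1}u_i^-=g_i\star_{n-1}C_i[\beta_i^{\epsilon_i}]\star_{n-1}g_i^-$, with $\epsilon_i=+$ or $-$ according to whether $\psi_i$ lies in $\tilde{\Br}$ or $\tilde{\Br}^-$. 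This is a decomposition of the shape~\eqref{HomotopyBaseTDFLemmaEq}.

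For $(2)\Rightarrow(1)$, I would start from a decomposition $f=\prod_i(g_i\star_{n-1}C_i[\beta_i^{\epsilon_i}]\star_{n-1}g_i^-)$ as in~\eqref{HomotopyBaseTDFLemmaEq}, each factor being a closed $n$-cell with base point $sf$. For each $i$ there is an $(n+1)$-cell $\beta_i^{\epsilon_i}\to 1_{s\beta_i}$ in $\Tr(\tilde{\Br})$: it is $\tilde{\beta_i}$ itself when $\epsilon_i=+$, and when $\epsilon_i=-$ it is built from $\tilde{\beta_i}^-$ together with the fact that $\beta_i$ is a closed $n$-cell, hence invertible in $\Tr$, so that $\beta_i^-$ is again closed at $s\beta_i$. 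Whiskering this cell by $C_i$, using $C_i[1_{s\beta_i}]=1$, and then by $g_i$ and $g_i^-$, produces an $(n+1)$-cell $g_i\star_{n-1}C_i[\beta_i^{\epsilon_i}]\star_{n-1}g_i^-\to 1_{sf}$; composing these $k$ cells along $\star_{n-1}$ gives an $(n+1)$-cell $f\to 1_{sf}$ in $\Tr(\tilde{\Br})$, and the reformulation above concludes.

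I expect the main obstacle to be the bookkeeping in the two passages between a whiskered generator $D_i[\psi_i]$ and a conjugate $g_i\star_{n-1}C_i[\beta_i^{\epsilon_i}]\star_{n-1}g_i^-$: one must use the context and whisker decompositions of~\ref{subsectionCat}, the exchange relations, and invertibility in $\Tr$ to verify that filling the hole of $D_i$ with the identity on the source (resp.\ target) of $\beta_i$ collapses the surrounding whiskering to an identity $n$-cell, while keeping every $\star_{n-1}$-composite well-typed over the single base point $sf$. Conceptually, the statement is the higher-dimensional counterpart of the classical fact that a family of relators generates its normal closure, and the proof follows the same telescoping pattern.
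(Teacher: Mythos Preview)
Your proof is correct and follows essentially the same route as the paper: both directions hinge on the decomposition of an $(n+1)$-cell $f\to 1_{sf}$ in $\Tr(\tilde{\Br})$ into elementary pieces $D_i[\tilde{\beta}_i^{\epsilon_i}]$, and on the observation that filling the hole of a whisker with an identity collapses it, so that each piece contributes a conjugate $g_i\star_{n-1}C_i[\beta_i^{\epsilon_i}]\star_{n-1}g_i^-$. The only cosmetic difference is that the paper peels off the first factor by a case analysis on $\epsilon_1$ and then inducts on $k$, whereas you write the whole telescoping product $f=\prod_i(u_{i-1}\star_{n-1}u_i^-)$ at once and identify each factor uniformly; these are the same computation.
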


\begin{proof}
Let us assume that $\tilde{\Br}$ is a homotopy basis of $\Tr$ and let us consider a closed $n$-cell $f:w\fl w$ in~$\Tr$. Then, by definition of a homotopy basis, there exists an $(n+1)$-cell $A:f\fl 1_w$ in $\Tr(\tilde{\Br})$. By construction of $\Tr(\tilde{\Br})$, the $(n+1)$-cell $A$ decomposes into
\[
A \:=\: A_1\star_n\cdots\star_n A_k,
\]
where each $A_i$ is an $(n+1)$-cell of $\Tr(\tilde{\Br})$ that contains exactly one generating $(n+1)$-cell of $\Br$. As a consequence, each $A_i$ has shape
\[
g_i\star_{n-1} C_i \left[ \tilde{\beta}_i^{\epsilon_i} \right] \star_{n-1} h_i
\]
with $\beta_i \in \Br$, $\epsilon_i \in \ens{-,+}$, $C_i \in \Wk{\Tr}$ and $g_i, h_i \in \Tr_n$, . By hypothesis on $A$, we have $f=s(A)$, hence:
\[
f \:=\: g_1 \star_{n-1} C_1[s(\beta_1^{\epsilon_1})] \star_{n-1} h_1.
\]
We proceed by case analysis on $\epsilon_1$. If $\epsilon_1=+$, then we have:
\begin{align*}
f \:
	&=\: g_1 \star_{n-1} C_1[\beta_1] \star_{n-1} h_1 \\
	&=\: \left( g_1 \star_{n-1} C_1[\beta_1] \star_{n-1} g_1^- \right) \star_{n-1} \left( g_1 \star_{n-1} h_1 \right) \\
	&=\: \left( g_1 \star_{n-1} C_1[\beta_1] \star_{n-1} g_1^- \right) \star_{n-1} s(A_2).
\end{align*}
And, if $\epsilon_1=-$, we get:
\begin{align*}
f \:
	&=\: g_1 \star_{n-1} h_1 \\
	&=\: \left( g_1 \star_{n-1} C_1[\beta_1^-] \star_{n-1} g_1^- \right) 
		\star_{n-1} \left( g_1 \star_{n-1} C_1[\beta_1] \star_{n-1} h_1 \right) \\
	&=\: \left( g_1 \star_{n-1} C_1[\beta_1^-] \star_{n-1} g_1^- \right) \star_{n-1} s(A_2).
\end{align*}
An induction on the natural number~$k$ proves that $f$ has a decomposition as in~\eqref{HomotopyBaseTDFLemmaEq}.

Conversely, we assume that every closed $n$-cell $f$ in $\Tr$ has a decomposition as in~\eqref{HomotopyBaseTDFLemmaEq}. Then we have $f \equiv_{\tilde{\Br}} 1_{s(f)}$ for every closed $n$-cell $f$ in $\Tr$. Let us consider two parallel $n$-cells $f$ and $g$ in $\Tr$. Then $f\star_{n-1} g^-$ is a closed $n$-cell, yielding $f\star_{n-1} g^- \equiv_{\tilde{\Br}} 1_{s(f)}$. We compose both members by $g$ on the right hand to get $f\equiv_{\tilde{\Br}} g$. Thus~$\tilde{\Br}$ is a homotopy basis of $\Tr$.
\end{proof}

%%%%%%%%%%%%%%%%%%%%%%%%%%%%%%%%%%%%%%%%%%%
\subsubsection{Finite derivation type}

One says that an $n$-polygraph $\Sigma$ has \emph{finite derivation type} when it is finite and when the track $n$-category $\Sigma^\top$ admits a finite homotopy basis. This property is Tietze-invariant for finite $n$-polygraphs, so that one says that an $n$-category has \emph{finite derivation type} when it admits a presentation by an $(n+1)$-polygraph with finite derivation type.

\begin{lemma}\label{quotientFDT}
Let $\Tr$ be a track $n$-category and let $\Gamma$ be a cellular extension of $\Tr$. If $\Tr$ has finite derivation type, then so does $\Tr/\Gamma$.
\end{lemma}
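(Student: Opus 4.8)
The plan is to transport a finite homotopy basis of $\Tr$ to $\Tr/\Gamma$ along the canonical projection. First I would observe that $\Tr/\Gamma$ is again a track $n$-category: since $\Gamma$ is a cellular extension of $\Tr$, hence a set of $(n+1)$-cells, the quotient merely identifies $n$-cells, and the inverse of an $n$-cell descends to the quotient (if $f\equiv_{\Gamma}g$ then $f^-\equiv_{\Gamma}g^-$), so every $n$-cell of $\Tr/\Gamma$ remains invertible. Write $\pi:\Tr\fl\Tr/\Gamma$ for the canonical $n$-functor; by construction $\pi$ is the identity on $k$-cells for every $k\leq n-1$, it is surjective on $n$-cells, and $\pi(f)=\pi(g)$ holds if and only if $f\equiv_{\Gamma}g$.

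Using that $\Tr$ has finite derivation type, fix a finite homotopy basis $\Br$ of $\Tr$. For each $\beta$ in $\Br$, with $\dr\beta=(u_\beta,v_\beta)$, the pair $(\pi u_\beta,\pi v_\beta)$ is an $n$-sphere of $\Tr/\Gamma$; this produces a finite cellular extension $\pi\Br$ of $\Tr/\Gamma$, together with a canonical $(n+1)$-functor $\Tr(\Br)\fl(\Tr/\Gamma)(\pi\Br)$ that extends $\pi$, sends each $\beta$ to the corresponding generator and each $\beta^-$ to its inverse. (One checks here that the defining cells of $\text{Inv}(\Br)$ are sent to those of $\text{Inv}(\pi\Br)$, so that the map indeed descends to these quotient track categories.)

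The crux is to show that $\pi\Br$ is a homotopy basis of $\Tr/\Gamma$, for which it suffices that every $n$-sphere $(f,g)$ of $\Tr/\Gamma$ satisfies $f\equiv_{\pi\Br}g$. I would argue by lifting: choose $n$-cells $\tilde{f}$ and $\tilde{g}$ of $\Tr$ with $\pi\tilde{f}=f$ and $\pi\tilde{g}=g$, which is possible since $\pi$ is surjective on $n$-cells. Because $\pi$ is the identity in dimension $n-1$ and $(f,g)$ is a sphere, we get $s(\tilde{f})=s(\tilde{g})$ and $t(\tilde{f})=t(\tilde{g})$, so $(\tilde{f},\tilde{g})$ is an $n$-sphere of $\Tr$. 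Since $\Br$ is a homotopy basis of $\Tr$, there is an $(n+1)$-cell $A:\tilde{f}\fl\tilde{g}$ in $\Tr(\Br)$; its image under the functor above is an $(n+1)$-cell from $f$ to $g$ in $(\Tr/\Gamma)(\pi\Br)$, hence $f\equiv_{\pi\Br}g$, and $(\Tr/\Gamma)(\pi\Br)$ is acyclic. Alternatively, one may run this argument through Lemma~\ref{LemmaHomotopiesBoucles}: a closed $n$-cell of $\Tr/\Gamma$ lifts to a closed $n$-cell of $\Tr$, again because its base point lives in dimension $n-1$; one rewrites the latter in the form~\eqref{HomotopyBaseTDFLemmaEq} over $\Br$, then projects.

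It remains to observe that the finiteness conditions packaged into ``finite derivation type'' are inherited by $\Tr/\Gamma$, which differs from $\Tr$ only by identifications of $n$-cells and whose homotopy basis $\pi\Br$ is finite; hence $\Tr/\Gamma$ has finite derivation type. I expect the only genuinely delicate point to be the lifting step, and it goes through precisely because $\Gamma$ adds cells in dimension $n+1$ only, so that $\pi$ leaves every lower dimension untouched and parallel cells of $\Tr/\Gamma$ lift to parallel cells of $\Tr$; all the rest is routine bookkeeping about the projection and about the construction $\Cr(\Gamma)$.
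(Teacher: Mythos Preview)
Your proposal is correct and follows the same approach as the paper: project a finite homotopy basis $\Br$ of $\Tr$ along the quotient map to obtain a finite cellular extension of $\Tr/\Gamma$, and check that it is again a homotopy basis. The paper's own proof is extremely terse (it only names the projected extension $\cl{\Br}$ and asserts it is a homotopy basis), whereas you spell out the lifting argument for spheres and the well-definedness of the induced $(n+1)$-functor; but there is no substantive difference in strategy.
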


\begin{proof}
Let $\Br$ be a finite homotopy basis of $\Tr$. Let us denote by $\cl{\Br}$ the cellular extension of $\Tr/\Gamma$ made of one $(n+1)$-cell $\cl{A}$ with source $\cl{f}$ and target $\cl{g}$ for each $(n+1)$-cell $A$ from $f$ to $g$ in $\Br$. Then $\cl{\Br}$ is a homotopy basis of $\Tr/\Gamma$.
\end{proof}

%%%%%%%%%%%%%%%%%%%%%%%%%%%%%%%%%%%%%%%%%%%
\section{Identities among relations}
\label{finiteDerivationType}
%%%%%%%%%%%%%%%%%%%%%%%%%%%%%%%%%%%%%%%%%%%
%%%%%%%%%%%%%%%%%%%%%%%%%%%%%%%%%%%%%%%%%%%

%%%%%%%%%%%%%%%%%%%%%%%%%%%%%%%%%%%%%%%%
\subsection{Abelian track \pdf{n}-categories}
%%%%%%%%%%%%%%%%%%%%%%%%%%%%%%%%%%%%%%%%

\begin{definition}
Let $\Tr$ be a track $n$-category. For every $(n-1)$-cell $u$ in $\Tr$, we denote by $\Aut_u^{\Tr}$ the group of closed $n$-cells of $\Tr$ with base $u$. This mapping extends to a natural system  $\Aut^{\Tr}$ on the $(n-1)$-category~$\Tr_{n-1}$, sending a context $C$ of $\Tr_{n-1}$ to the morphism of groups that maps $f$ to $C[f]$. 

A track $n$-category $\Tr$ is \emph{abelian} when, for every $(n-1)$-cell $u$ of $\Tr$, the group $\Aut_u^{\Tr}$ is abelian. The \emph{abelianized} of a track $n$-category $\Tr$ is the track $n$-category denoted by $\ab{\Tr}$ and defined as the quotient of~$\Tr$ by the $n$-spheres $(f\star_{n-1} g, g\star_{n-1} f)$, where $f$ and $g$ are closed $n$-cells with the same base. 
\end{definition}

\begin{lemma}
Each $\Aut_u^{\ab{\Tr}}$ is the abelianized group of $\Aut_u^{\Tr}$. As a consequence, a track $n$-category $\Tr$ is abelian if and only if the natural system $\Aut^{\Tr}$ on $\Tr_{n-1}$ is abelian.
\end{lemma}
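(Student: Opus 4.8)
The plan is to analyse the canonical projection $\pi\colon\Tr\fl\ab{\Tr}$. By construction $\ab{\Tr}=\Tr/\Gamma$, where $\Gamma$ is the cellular extension of $\Tr$ whose $(n+1)$-cells have as boundaries the $n$-spheres $(f\star_{n-1}g,\:g\star_{n-1}f)$, for $f$ and $g$ ranging over closed $n$-cells with a common base. Quotienting by $\Gamma$ identifies only $n$-cells, so $\pi$ is the identity on $(n-1)$-cells; hence for a fixed $(n-1)$-cell $u$ it restricts to a morphism of groups $\pi_u\colon\Aut_u^{\Tr}\fl\Aut_u^{\ab{\Tr}}$, and $\pi_u$ is surjective since any closed $n$-cell of $\ab{\Tr}$ based at $u$ is the class $\cl{h}$ of an $n$-cell $h$ of $\Tr$ whose source and target, being $(n-1)$-cells mapped to $u$, already equal $u$, so that $h\in\Aut_u^{\Tr}$. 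Thus the first assertion reduces to the identification $\ker\pi_u=[\Aut_u^{\Tr},\Aut_u^{\Tr}]$.

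The inclusion $[\Aut_u^{\Tr},\Aut_u^{\Tr}]\subseteq\ker\pi_u$ is immediate: the $n$-spheres $(f\star_{n-1}g,\:g\star_{n-1}f)$ with $f,g\in\Aut_u^{\Tr}$ belong to $\Gamma$, so $\pi_u(f)$ and $\pi_u(g)$ commute in $\Aut_u^{\ab{\Tr}}$, which is therefore abelian, so $\pi_u$ kills commutators. For the reverse inclusion --- the crux --- I would bound the congruence $\equiv_{\Gamma}$ from above by a handier one. Define $\approx$ on the $n$-cells of $\Tr$ by letting $a\approx b$ whenever $a$ and $b$ are parallel, with common source $v$ say, and $a\star_{n-1}b^-\in[\Aut_v^{\Tr},\Aut_v^{\Tr}]$. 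One checks that $\approx$ is an equivalence relation (a commutator subgroup is a subgroup), that it is compatible with $\star_{n-1}$ and, using the exchange relations together with the facts that every whisker $C$ of $\Tr$ distributes over $\star_{n-1}$ and induces a group morphism sending commutator subgroup into commutator subgroup, that it is compatible with the lower compositions as well; and that it contains the defining pairs of $\Gamma$, since $(f\star_{n-1}g)\star_{n-1}(g\star_{n-1}f)^-$ is a commutator in $\Aut_u^{\Tr}$. Being the least congruence containing those pairs, $\equiv_{\Gamma}$ lies inside $\approx$; applied to $f\in\Aut_u^{\Tr}$ with $\pi_u(f)=1$, i.e. $f\equiv_{\Gamma}1_u$, this yields $f\in[\Aut_u^{\Tr},\Aut_u^{\Tr}]$. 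Hence $\Aut_u^{\ab{\Tr}}\cong\Aut_u^{\Tr}/[\Aut_u^{\Tr},\Aut_u^{\Tr}]$, naturally in $u$, which is the first claim.

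The last assertion then follows formally: by definition $\Tr$ is abelian precisely when every $\Aut_u^{\Tr}$ is abelian, which is exactly what it means for the natural system $\Aut^{\Tr}$ on $\Tr_{n-1}$ to be abelian; equivalently, by the first part, $\Tr$ is abelian iff $\Aut_u^{\Tr}$ coincides with its abelianized group $\Aut_u^{\ab{\Tr}}$ for every $u$. I expect the single genuinely delicate point to be checking that $\approx$ is a congruence on the $n$-cells of $\Tr$: compatibility with whiskering is routine, but pre- and post-composition by $n$-cells that are not closed must be handled with care, the corresponding base-changing conjugation being recognised as a group isomorphism and hence commutator-preserving. Everything else is bookkeeping with the universal property of the quotient and elementary group theory.
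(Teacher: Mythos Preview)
The paper states this lemma without proof, so there is nothing to compare against directly: the authors evidently regard it as routine. Your proposal supplies exactly the argument one would expect and it is correct. The surjectivity of $\pi_u$ is as you say, and the key inclusion $\ker\pi_u\subseteq[\Aut_u^{\Tr},\Aut_u^{\Tr}]$ via the auxiliary congruence $\approx$ is the standard way to control a quotient by a cellular extension. Your caution about the delicate point is well placed: once you know that $\approx$ is stable under $\star_{n-1}$ (using the base-changing conjugation $(\cdot)^g$, which preserves commutator subgroups) and under whiskering (a whisker induces a group morphism on each $\Aut_u^{\Tr}$), compatibility with the lower compositions $\star_i$ for $i<n-1$ follows from the exchange law, since $f\star_i g=(f\star_i s_{n-1}g)\star_{n-1}(t_{n-1}f\star_i g)$. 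The second assertion is, as you note, a restatement of the definitions and does not really need the first part.
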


\begin{lemma}
Let $\Tr$ be a track $n$-category. For every $n$-cell $g:v\fl u$, the mapping 
$(\cdot)^g$ from $\Aut^{\Tr}_u$ to $\Aut^{\Tr}_v$ and sending $f$ to 
\[
f^g \:=\: g^- \star_{n-1} f \star_{n-1} g
\]
is an isomorphism of groups. Moreover, if $\Tr$ is abelian and $g,h:v\fl u$ are $n$-cells of $\Tr$, then the isomorphisms $(\cdot)^g$ and $(\cdot)^h$ are equal.
\end{lemma}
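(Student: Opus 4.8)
The plan is to recognize the assertion as the elementary fact that, in a groupoid, conjugation by an arrow identifies the vertex groups at its two endpoints, together with the observation that this identification ceases to depend on the chosen arrow as soon as the vertex groups are abelian. Here the relevant groupoid is the one with the $(n-1)$-cells of $\Tr$ as objects, the $n$-cells of $\Tr$ as arrows, and $\star_{n-1}$ as composition — this is a groupoid because $\Tr$ is a track $n$-category — in which $\Aut_u^{\Tr}$ is the vertex group at $u$. All the computations below stay inside this groupoid, so the exchange relations~\eqref{exchange relation} are never invoked; only the globular relations~\eqref{globular relations} and the invertibility of $n$-cells are used.

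I would first check that $(\cdot)^g$ is well defined, that is, that $f^g$ is again a closed $n$-cell based at $v$ whenever $f$ is one based at $u$; this is a routine computation with the source and target maps. Then $(\cdot)^g$ is a morphism of groups: in $(f_1\star_{n-1}f_2)^g$ one inserts, between the two conjugates, the identity $n$-cell obtained by composing $g$ with $g^-$ — the single step where the track hypothesis enters — and recovers $f_1^g\star_{n-1}f_2^g$. For bijectivity I would record the functoriality identity
\[
(\cdot)^{g_1\star_{n-1}g_2}\:=\:(\cdot)^{g_1}\circ(\cdot)^{g_2},
\]
valid for composable $n$-cells $g_1,g_2$ and immediate from $(g_1\star_{n-1}g_2)^-=g_2^-\star_{n-1}g_1^-$; together with $(\cdot)^{1_w}=\id$ it shows that $(\cdot)^{g^-}$ is a two-sided inverse of $(\cdot)^g$, so $(\cdot)^g$ is an isomorphism of groups.

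For the final statement, suppose $\Tr$ is abelian and let $g,h:v\fl u$ be parallel $n$-cells. Since $g$ is invertible, $c=h\star_{n-1}g^-$ is a closed $n$-cell based at $v$ with $h=c\star_{n-1}g$, and the functoriality identity gives $(\cdot)^h=(\cdot)^c\circ(\cdot)^g$. But $(\cdot)^c$ is conjugation by an element of $\Aut_v^{\Tr}$ inside $\Aut_v^{\Tr}$ itself, hence an inner automorphism of that group; since $\Tr$ is abelian the group is commutative, so $(\cdot)^c=\id_{\Aut_v^{\Tr}}$ and therefore $(\cdot)^h=(\cdot)^g$.

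I do not expect a genuine obstacle: the proof is exactly the groupoid computation above. The only points requiring some care are the bookkeeping of sources and targets in the composites defining $f^g$, $c$, and the functoriality identity; the verification that the copy of $g\star_{n-1}g^-$ occurring inside a conjugate really is an identity cell (which is where being a track $n$-category matters for the first two assertions); and the remark that commutativity of the vertex group is needed for one purpose only, namely making $(\cdot)^c$ trivial.
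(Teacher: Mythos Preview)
Your proposal is correct and follows the same route as the paper: verify the homomorphism property by inserting $g\star_{n-1}g^-$ between the two factors, take $(\cdot)^{g^-}$ as the inverse, and for the abelian claim reduce everything to the observation that conjugation by a closed $n$-cell is trivial. The only difference is organizational --- you package the last step via the functoriality identity and the phrase ``inner automorphism of an abelian group'', whereas the paper writes the same computation out directly as $f^g=(g^-\star_{n-1}h)\star_{n-1}f^h\star_{n-1}(h^-\star_{n-1}g)$ and then commutes the first two factors; the closed cell playing the role of your $c$ is $g^-\star_{n-1}h$.
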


\begin{proof}
We have:
\[
(1_u)^g 
	\:=\: g^-\star_{n-1} 1_u \star_{n-1} g 
	\:=\: 1_v.
\]
Let $f_1$ and $f_2$ be closed $n$-cells of $\Tr$ with base $u$. Then:
\begin{align*}
(f_1\star_{n-1} f_2)^g
	\:&=\: g^-\star_{n-1} f_1\star_{n-1} f_2 \star_{n-1} g \\
	\:&=\: g^-\star_{n-1} f_1\star_{n-1} g \star_{n-1} g^- \star_{n-1} f_2 \star_{n-1} g \\
	\:&=\: f_1^g\star_{n-1} f_2^g.
\end{align*}
Hence $(\cdot)^g$ is a morphism of groups and it admits $(\cdot)^{g^-}$ as inverse. Now, if $\Tr$ is abelian and $g,h:v\fl u$ are parallel $n$-cells, we have:
\begin{align*}
f^g 
	\:&=\: g^- \star_{n-1} f \star_{n-1} g \\
	\:&=\: (g^- \star_{n-1} h)\star_{n-1} (h^- \star_{n-1} f \star_{n-1} h) \star_{n-1} (h^- \star_{n-1} g) \\
	\:&=\: (h^- \star_{n-1} f \star_{n-1} h) \star_{n-1} (g^- \star_{n-1} h) \star_{n-1} (h^- \star_{n-1} g) \\
	\:&=\: f^h. 
	\tag*{\qedhere}
\end{align*}
\end{proof}

\begin{proposition}\label{TDFabelien_TDF}
If a track $n$-category $\Tr$ has finite derivation type, then so does $\ab{\Tr}$.
\end{proposition}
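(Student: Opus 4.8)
The plan is to observe that $\ab{\Tr}$ is, by its very construction, a quotient of $\Tr$ by a cellular extension, and then to invoke Lemma~\ref{quotientFDT}. Precisely, $\ab{\Tr} = \Tr/\Gamma$, where $\Gamma$ is the cellular extension of $\Tr$ whose $(n+1)$-cells are the $n$-spheres $(f\star_{n-1}g,\,g\star_{n-1}f)$, with $f$ and $g$ ranging over the pairs of closed $n$-cells of $\Tr$ sharing a base point. A quotient of $\Tr$ is still generated by whatever generators $\Tr$ has, so the only substantive point is the descent of a finite homotopy basis along the quotient.

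In order, the steps I would carry out are: (i) identify $\ab{\Tr}$ with $\Tr/\Gamma$ as above; (ii) note that $\Gamma$ is in general infinite; (iii) check that Lemma~\ref{quotientFDT} — which imposes no finiteness assumption on the cellular extension it quotients by — really applies here, i.e. that its proof goes through verbatim for this $\Gamma$; (iv) conclude. For step (iii): quotienting by $n$-spheres leaves the underlying $(n-1)$-category untouched, so $\ab{\Tr}_{n-1}=\Tr_{n-1}$ and every $n$-sphere of $\ab{\Tr}$ lifts to an $n$-sphere of $\Tr$. Hence, from a finite homotopy basis $\Br$ of $\Tr$ one forms the cellular extension $\cl{\Br}$ of $\ab{\Tr}$ with one $(n+1)$-cell $\cl{A}:\cl{f}\fl\cl{g}$ for each $A:f\fl g$ in $\Br$; this $\cl{\Br}$ is finite, its cardinality being that of $\Br$ and not that of $\Gamma$, and for each $n$-sphere $(\cl{f},\cl{g})$ of $\ab{\Tr}$ the image in $\ab{\Tr}(\cl{\Br})$ of any $(n+1)$-cell of $\Tr(\Br)$ from $f$ to $g$ witnesses $\cl{f}\equiv_{\cl{\Br}}\cl{g}$. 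So $\cl{\Br}$ is a finite homotopy basis of $\ab{\Tr}$, and $\ab{\Tr}$ has finite derivation type.

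The sole potential obstacle is thus bookkeeping around the infiniteness of $\Gamma$: one must make sure that nothing in Lemma~\ref{quotientFDT} (or in the argument above) secretly uses finiteness of the cellular extension. It does not, precisely because the finiteness that matters is that of $\Br$, which controls $\cl{\Br}$, whereas $\Gamma$ only governs which $n$-cells get identified. In short, the proof amounts to Lemma~\ref{quotientFDT} applied to the cellular extension defining the abelianization.
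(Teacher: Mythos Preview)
Your proposal is correct and follows exactly the paper's approach: the paper's proof is the single sentence ``We apply Lemma~\ref{quotientFDT} to the quotient $\ab{\Tr}$ of $\Tr$,'' and your write-up simply unpacks this application, making explicit the cellular extension $\Gamma$ and checking that its infiniteness is harmless. Your added care about the size of $\Gamma$ versus that of $\Br$ is a useful clarification but not a different argument.
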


\begin{proof}
We apply Lemma~\ref{quotientFDT} to the quotient $\ab{\Tr}$ of $\Tr$.
\end{proof}

%%%%%%%%%%%%%%%%%%%%%%%%%%%%%%%%%%%%%%%%
\subsection{Defining identities among relations}
\label{identitiesAmongRelations} 
%%%%%%%%%%%%%%%%%%%%%%%%%%%%%%%%%%%%%%%%

\begin{definition}
Let $\Tr$ be a track $n$-category and let $D$ be a natural system on $\cl{\Tr}$. We denote by $\rep{D}$ the natural system on $\Tr_{n-1}$ defined by $\rep{D}_u=D_{\cl{u}}$. A track $n$-category $\Tr$ is \emph{linear} when there exists an abelian natural system $\Pi(\Tr)$ on $\cl{\Tr}$ such that $\rep{\Pi(\Tr)}$ is isomorphic to $\Aut^{\Tr}$. 
\end{definition}

\begin{remark}
If such an abelian natural system $D$ exists, then it is unique up to
isomorphism. Indeed, by definition of $\rep{D}$, we have
$\rep{D}_u=\rep{D}_v$ whenever $u$ and $v$ are $(n-1)$-cells
of $\Tr$ such that $\cl{u}=\cl{v}$ holds. Thus, if $u$ is an
$(n-1)$-cell of $\cl{\Tr}$, then $D_u=\rep{D}_w$ for every
$(n-1)$-cell $w$ of $\Tr$ with $\cl{w}=u$. As a consequence,
if $D$ and~$E$ are abelian natural systems on $\cl{\Tr}$ such that
both $\rep{D}$ and $\rep{E}$ are isomorphic to $\Aut^{\Tr}$,
then $D$ and~$E$ are isomorphic. 
\end{remark}

\begin{theorem}
\label{LinearTrackExtension}
A track $n$-category is abelian if and only if it is linear.
\end{theorem}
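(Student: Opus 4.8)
The plan is to establish the two implications separately. The direction "linear $\Rightarrow$ abelian" is immediate: if $\Tr$ is linear, then $\Aut^{\Tr}\cong\rep{\Pi(\Tr)}$ with $\Pi(\Tr)$ abelian, so each group $\Aut_u^{\Tr}\cong\Pi(\Tr)_{\cl{u}}$ is abelian, whence $\Tr$ is abelian by the definition of abelianness for track $n$-categories. The substance of the theorem is the converse: from an abelian $\Tr$, construct an abelian natural system $\Pi(\Tr)$ on $\cl{\Tr}$ whose pullback $\rep{\Pi(\Tr)}$ along the quotient functor $\Ct{\Tr_{n-1}}\to\Ct{\cl{\Tr}}$ is isomorphic to $\Aut^{\Tr}$; uniqueness is already handled by the Remark preceding the statement.

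First I would define $\Pi(\Tr)$ on objects: for an $(n-1)$-cell $u$ of $\cl{\Tr}$, pick any $(n-1)$-cell $w$ of $\Tr$ with $\cl{w}=u$ and set $\Pi(\Tr)_u=\Aut_w^{\Tr}$. To see this is well-defined up to canonical isomorphism, note that if $\cl{w}=\cl{w'}$ then there is an $n$-cell $g:w'\to w$ in $\Tr$, and the preceding lemma gives an isomorphism $(\cdot)^g:\Aut_w^{\Tr}\to\Aut_{w'}^{\Tr}$; crucially, since $\Tr$ is abelian, that same lemma asserts $(\cdot)^g=(\cdot)^h$ for any two parallel $n$-cells $g,h$, so the connecting isomorphism is independent of the chosen $g$ and the family of groups is canonically identified. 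This is the key point where abelianness is used, and I expect it to be the main obstacle to get fully rigorous: one must check coherence (that composing two such canonical isomorphisms along $w''\to w'\to w$ agrees with the direct one), which follows from functoriality of conjugation but deserves care. A clean way to package this is to say that abelianness makes $u\mapsto\Aut_u^{\Tr}$ factor through $\cl{\Tr}_{n-1}$; one then takes a colimit (or equivalently a choice of section of $\Tr_{n-1}\twoheadrightarrow\cl{\Tr}_{n-1}$) to rigidify the construction.

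Next I would define $\Pi(\Tr)$ on morphisms. A context $C$ of $\cl{\Tr}_{n-1}$ from $u$ to $v$ lifts to a context $\tilde C$ of $\Tr_{n-1}$ from some $w$ to some $w'$ with $\cl{w}=u$, $\cl{w'}=v$, and $\Aut^{\Tr}$ already provides a group morphism $\Aut_w^{\Tr}\to\Aut_{w'}^{\Tr}$, $a\mapsto\tilde C[a]$; I set $\Pi(\Tr)_C$ to be this map transported along the canonical identifications from the object step. Independence of the lift of $C$ again reduces, via the exchange relations and the conjugation lemma, to the abelian hypothesis: two lifts of $C$ differ by conjugation by $n$-cells, and in the abelian setting such conjugations act trivially on the relevant $\Aut$ groups, so the induced map is unchanged. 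Functoriality of $\Pi(\Tr)$ ($\Pi(\Tr)_{C'\circ C}=\Pi(\Tr)_{C'}\circ\Pi(\Tr)_C$ and $\Pi(\Tr)_{\mathrm{id}}=\mathrm{id}$) then follows from functoriality of $\Aut^{\Tr}$ together with the coherence of the canonical identifications. Finally, $\Pi(\Tr)$ is abelian because each $\Pi(\Tr)_u=\Aut_w^{\Tr}$ is abelian by hypothesis on $\Tr$, and by construction $\rep{\Pi(\Tr)}_u=\Pi(\Tr)_{\cl{u}}=\Aut_u^{\Tr}$ naturally in $u$, giving the required isomorphism $\rep{\Pi(\Tr)}\cong\Aut^{\Tr}$ of natural systems on $\Tr_{n-1}$. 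Thus $\Tr$ is linear, and with the uniqueness from the Remark, the proof is complete.
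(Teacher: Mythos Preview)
Your argument is correct, and both the forward implication and the key mechanism for the converse (the conjugation lemma makes $\Aut^{\Tr}$ descend along $\Tr_{n-1}\twoheadrightarrow\cl{\Tr}_{n-1}$) match the paper exactly. The difference is in how $\Pi(\Tr)$ is actually built. You construct it by choosing a section---a representative $w$ for each $(n-1)$-cell $u$ of $\cl{\Tr}$---and transporting along the canonical conjugation isomorphisms; the paper instead gives a choice-free presentation by generators and relations: $\Pi(\Tr)_u$ is the abelian group on symbols $\iar{f}$ for all closed $n$-cells $f:a\to a$ with $\cl{a}=u$, modulo $\iar{f\star_{n-1}g}=\iar{f}+\iar{g}$ for loops at a common base and $\iar{f\star_{n-1}g}=\iar{g\star_{n-1}f}$ for $f:a\to b$, $g:b\to a$. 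This second relation is precisely what glues the various $\Aut_a^{\Tr}$ over a fixed $u$ into one group, i.e.\ it realises explicitly the colimit you allude to; the isomorphism $\Phi_u(\iar{f})=f^g$ then recovers your identification $\Pi(\Tr)_{\cl{u}}\cong\Aut_u^{\Tr}$. The well-definedness check for the context action in the paper (reducing to contexts of the form $v\star_{n-2}x$ and using the exchange relation to exhibit the discrepancy as a conjugation) is exactly the verification you flag as needing care. The two routes are equivalent, but the paper's buys a concrete calculus: the bracket $\iar{\cdot}$ together with its two relations becomes the working notation for identities among relations throughout the remainder of the article, whereas with your formulation one would have to introduce that notation separately.
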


\begin{proof}
If $\Tr$ is linear, then each group $\Aut^{\Tr}_u$ is isomorphic to an abelian group. Thus $\Tr$ is abelian.

Conversely, let us assume that $\Tr$ is abelian and let us define the abelian natural system $\Pi(\Tr)$ on $\cl{\Tr}$. For an $(n-1)$-cell $u$ of $\cl{\Tr}$, the abelian group $\Pi(\Tr)_u$ is defined as follows, by generators and relations:
\begin{itemize}
\item It has one generator $\iar{f}$ for every $n$-cell $f:a\fl a$ with $\cl{a}=u$. 
\item Its defining relations are:
\begin{description}
\item[i)] $\iar{f \star_{n-1} g} = \iar{f} + \iar{g}$, for $f:a\fl a$ and $g:a\fl a$ with $\cl{a}=u$; 
\item[ii)] $\iar{f\star_{n-1} g} = \iar{g\star_{n-1} f}$, for $f:a\fl b$ and $g:b\fl a$ with $\cl{a}=\cl{b}=u$.
\end{description}
\end{itemize}
% for every $n$-cells $f$ and $g$ in $\Tr$ with same base $a$ such that $\cl{a}=u$ and
% \[
% \item[\quad ii)] $f \star_{n-1} g \simeq g \star_{n-1} f$, 
% for any $n$-cells $f : a \fl b$ and $g : b \fl a$ in $\Tr$ with $\cl{a}=\cl{b}=u$.
% \end{description}
If $u$ and $u'$ are $(n-1)$-cells of $\cl{\Tr}$ and if $C$ is a context of $\cl{\Tr}$ from $u$ to $u'$, then the action
\[ 
\Pi(\Tr)_C \::\: \Pi(\Tr)_u \:\longrightarrow\: \Pi(\Tr)_{u'}
\]
is defined, on a generator $\iar{f}$, with $f$ a closed $n$-cell of $\Tr$ with base $a$ such that $\cl{a}=u$, by 
\[
C\iar{f} \:=\: \iar{ B[f] },
\]
where $B$ is a context of $\Tr_{n-1}$, from $a$ to some $a'$ with $\cl{a}'=u'$, such that $\cl{B}=C$ holds. We note that $B[f]$ is a closed $n$-cell of $\Tr$ with base some $a'$ such that $\cl{a}'=u'$, so that $\iar{B[f]}$ is a generating element of $\Pi(\Tr)_{u'}$. Now, let us check that this action is well-defined, that is, it does not depend on the choice of the representatives $f$ and $B$.

For $f$, we check that $\Pi(\Tr)_C$ is compatible with the relations defining $\Pi(\Tr)_u$. If $f$ and $g$ are closed $n$-cells of $\Tr$ with base $a$ such that $\cl{a}=u$, then we have:
\[
\iar{B[f\star_{n-1} g]} 
	\:=\: \iar{B[f] \star_{n-1} B[g]}
	\:=\: \iar{B[f]} + \iar{B[g]}.
\]
And, for $n$-cells $f:a\fl b$ and $g:b\fl a$, with $\cl{a}=\cl{b}=u$, we have:
\[
\iar{ B[f\star_{n-1} g] } 
	\:=\: \iar{B[f] \star_{n-1} B[g] }
	\:=\: \iar{B[g] \star_{n-1} B[f] }
	\:=\: \iar{ B[g\star_{n-1} f] }.
\]
For $B$, we decompose $C$ in $v\star_{n-2}C'\star_{n-2}w$, where $v$ and $w$ are $(n-1)$-cells of $\cl{\Tr}$ and $C'$ is a whisker of $\cl{\Tr}$. Since $\cl{\Tr}$ and $\Tr_{n-1}$ coincide up to dimension $n-2$, any representative $B$ of $C$ can be written $B=b \star_{n-2}C'\star_{n-2}c$, where $b$ and $c$ are respective representatives of $v$ and $w$ in $\Tr_{n-1}$. As a consequence, it is sufficient (and, in fact, equivalent) to prove that the definition of $\Pi(\Tr)_C$ is invariant with respect to the choice of the representative $B$ of $C$ when $C$ has shape $v\star_{n-2}x$ or $x\star_{n-2}w$.

We examine the case $C=v\star_{n-2}x$, the other one being symmetric. We consider two representatives~$b$ and $b'$ of $v$ in $\Tr_{n-1}$. By definition of $\cl{\Tr}$, there exists an $n$-cell $g:b\fl b'$ in~$\Tr$, as in the following diagram, drawn for the case $n=2$:
\[
\xymatrix{
\strut 
	\ar@/^3ex/[rr] ^-{b} _-{}="u1"
        \ar@/_3ex/[rr] _-{b'}  ^-{}="u2"
&& \strut
	\ar[rr] ^-{a}="w1" _-{}="w"
&& \strut
\ar@2 "u1";"u2" ^-{g}
\ar @(dr,dl)@{=>} "w" ; "w" ^(0.25){f}
}
\]
Thanks to the exchange relation, we have:
\[
(g\star_{n-2} a) \star_{n-1} (b'\star_{n-2} f) 
	\:=\: g \star_{n-2} f 
	\:=\: (b\star_{n-2} f) \star_{n-1} (g\star_{n-2} a).
\]
Hence: 
\[
b'\star_{n-2} f \:=\: (g^-\star_{n-2} a) \star_{n-1} (b\star_{n-2}f) \star_{n-1} (g\star_{n-2} a).
\]
As, a consequence, one gets, using the second defining relation of $\Pi(\Tr)_{v\star_{n-2} u}$:
\begin{align*}
\iar{b'\star_{n-2} f} 
	\:&=\: \iar{ (g^-\star_{n-2} a) \star_{n-1} (b\star_{n-2} f) \star_{n-1} (g\star_{n-2} a) } \\
	\:&=\: \iar{ (b\star_{n-2} f) \star_{n-1} (g\star_{n-2}a) \star_{n-1} (g^-\star_{n-2}a) } \\
	\:&=\: \iar{ b\star_{n-2} f }.
\end{align*}
Now, let us prove that the abelian natural systems $\rep{\Pi(\Tr)}$ and $\Aut^{\Tr}$ are isomorphic. For an $(n-1)$-cell $u$ of $\Tr$, we define $\Phi_u:\Pi(\Tr)_{\cl{u}}\fl\Aut^{\Tr}_u$ as the morphism of groups given on generators by
\[
\Phi_u(\iar{f}) \:=\: f^g,
\]
where $f$ is a closed $n$-cell of $\Tr$ with base $v$ such that $\cl{v}=\cl{u}$ and $g$ is any $n$-cell of $\Tr$ with source $v$ and target $u$. Let us check that $\Phi_u$ is well-defined. We already know that $\Phi_u$ is independent of the choice of~$g$. Let us prove that this definition is compatible with the relations defining $\Pi(\Tr)_{\cl{u}}$. 

For the first relation, let $f_1$ and $f_2$ be closed $n$-cells of $\Tr$ with base $v$ such that $\cl{v}=\cl{u}$ and let $g:v\fl u$ be an $n$-cell of $\Tr$. Then:
\begin{align*}
\Phi_u(\iar{f_1\star_{n-1}f_2}) 
	\:&=\: (f_1\star_{n-1}f_2)^g \\
	\:&=\: f_1^g \star_{n-1} f_2^g \\
	\:&=\: \Phi_u(\iar{f_1}) \star_{n-1} \Phi_u(\iar{f_2}) \\
	\:&=\: \Phi_u(\iar{f_1}+\iar{f_2}).
\end{align*}
For the second relation, we fix $n$-cells $f_1:v_1\fl v_2$, $f_2:v_2\fl v_1$ and $g:v_1\fl u$, with $\cl{v_1}=\cl{v_2}=\cl{u}$. Then:
\begin{align*}
\Phi_u(\iar{f_1\star_{n-1}f_2})
	\:&=\: (f_1\star_{n-1}f_2)^g \\
	\:&=\: (g^- \star_{n-1} f_1) \star_{n-1} (f_2 \star_{n-1} f_1) \star_{n-1} (f_1^- \star_{n-1} g) \\
	\:&=\: (f_2\star_{n-1} f_1)^{g^-\star_{n-1} f_1} \\
	\:&=\: \Phi_u(\iar{f_2\star_{n-1}f_1}).
\end{align*}
Thus $\Phi_u$ is a morphism of groups from $\Pi(\Tr)_{\cl{u}}$ to $\Aut^{\Tr}_u$. Moreover, it admits $f\mapsto\iar{f}$ as inverse and, as a consequence, is an isomorphism.

Finally, let us prove that $\Phi_u$ is natural in $u$. Let $C$ be a context of $\Tr_{n-1}$ from $u$ to $v$. Let us check that the morphisms of groups $\Phi_v\circ\Pi(\Tr)_{\cl{C}}$ and $\Aut^{\Tr}_C \circ \Phi_u$ coincide. Let $f$ be a closed $n$-cell of $\Tr$ with base point $u'$ such that $\cl{u}'=\cl{u}$. We fix an $n$-cell $g:u'\fl u$ in $\Tr$ and we note that $C[g]$ is an $n$-cell of $\Tr$ with source $C[u']$ and target $C[u]=v$. Then we have:
\begin{align*}
\Phi_v\circ\Pi(\Tr)_{\cl{C}} (\iar{f}) 
	\:&=\: (C[f])^{C[g]} \\
	\:&=\: C[g^-] \star_{n-1} C[f] \star_{n-1} C[g] \\
	\:&=\: C\left[ g^-\star_{n-1} f \star_{n-1} g \right] \\ 
	\:&=\: C[f^g] \\
	\:&=\: \Aut^{\Tr}_C \circ \Phi_u (\iar{f}). \tag*{\qedhere}
\end{align*}
\end{proof}

\begin{remark}
Theorem~\ref{LinearTrackExtension} is proved
in~\cite{BauesDreckmann89,BauesJibladze02} for the case
$n=2$.  
\end{remark}

\begin{definition}
Let $\Sigma$ be an $n$-polygraph. The \emph{natural system of identities among relations of $\Sigma$} is the abelian natural system $\Pi(\abtck{\Sigma})$, which we simply denote by $\Pi(\Sigma)$. If $w$ is an $(n-1)$-cell of $\cl{\Sigma}$, an element of the abelian group $\Pi(\Sigma)_w$ is called an \emph{identity among relations associated to $w$}.
\end{definition}

%%%%%%%%%%%%%%%%%%%%%%%%%%%%%%%%%%%%%%%%
\subsection{Identities among relations of Tietze-equivalent polygraphs}
%%%%%%%%%%%%%%%%%%%%%%%%%%%%%%%%%%%%%%%%

% \begin{lemma}
% Let $\Sigma$ be an $n$-polygraph and let $w$ be an $(n-1)$-cell of $\cl{\Sigma}$. The identities among relations associated to $w$ are the sums
% \begin{equation}
% \label{eqnDecompositionIAR1}
% \iar{f} \:=\: \epsilon_1 C_1 \iar{\phi_1} \:+\: \cdots \:+\: \epsilon_k C_k \iar{\phi_k},
% \end{equation}
% where, for every $i\in\ens{1,\dots,k}$, $\phi_i$ is an $n$-cell of $\Sigma$, $C_i$ is a whisker of $\Sigma^*$ and $\epsilon_i$ is in $\ens{-,+}$, such that the composite
% \begin{equation}
% \label{eqnDecompositionIAR2}
% f \:=\: C_1 [\phi_1^{\epsilon_1} ] \:\star_{n-1}\: \cdots \:\star_{n-1}\: C_k [\phi_k^{\epsilon_k} ]
% \end{equation}
% exists and is a closed $n$-cell of $\abtck{\Sigma}$ with base $u$ such that $\cl{u}=w$.
% \end{lemma}

% \begin{proof}
% By definition, the abelian group $\Pi(\Sigma)_w$ is generated by the $\iar{f}$ where $f$ is a closed $n$-cells of $\abtck{\Sigma}$ with base $u$ such that $\cl{u}=w$. By construction of $\abtck{\Sigma}$, the $n$-cell $f$ has a decomposition such as in~\eqref{eqnDecompositionIAR2}. We apply the morphism of groups $\iar{\cdot}$ to this decomposition to get~\eqref{eqnDecompositionIAR1}.
% \end{proof}

\begin{lemma}
\label{LemmaFunctorsTietzeEquivalent}
Let $\Sigma$ and $\Upsilon$ be two Tietze-equivalent $n$-polygraphs. Then there exist $n$-functors 
\[
F \::\: \abtck{\Sigma}\:\fl\:\abtck{\Upsilon}
\qquad\text{and}\qquad
G \::\: \abtck{\Upsilon}\:\fl\:\abtck{\Sigma}
\] 
such that the following two diagrams commute:
\[
\xymatrix{
{\abtck{\Sigma}} 
	\ar[r] ^-{F}
	\ar@{->>} [d] _-{\pi_{\Sigma}}
	\ar@{} [dr] |-{\copyright}
& {\abtck{\Upsilon}}
	\ar@{->>} [d] ^-{\pi_{\Upsilon}}
\\
{\cl{\Sigma}}
	\ar@{=} [r] 
& {\cl{\Upsilon}}
}
\qquad\qquad\qquad
\xymatrix{
{\abtck{\Upsilon}} 
	\ar[r] ^-{G}
	\ar@{->>} [d] _-{\pi_{\Upsilon}}
	\ar@{} [dr] |-{\copyright}
& {\abtck{\Sigma}}
	\ar@{->>} [d] ^-{\pi_{\Sigma}}
\\
{\cl{\Upsilon}}
	\ar@{=} [r] 
& {\cl{\Sigma}}
}
\]
\end{lemma}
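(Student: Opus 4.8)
The plan is to construct $F$ (and $G$ symmetrically) by lifting the Tietze isomorphism to the free track $n$-categories through their universal property, and then pushing it across the abelianization. Fix an isomorphism of $(n-1)$-categories $\theta\colon\cl{\Sigma}\fl\cl{\Upsilon}$, the bottom edge of the first square (drawn there as an identification). Recall that $\tck{\Sigma}=\Sigma_{n-1}^*(\Sigma_n)$, so that an $n$-functor from $\tck{\Sigma}$ to a track $n$-category $\Tr$ is the same thing as an $(n-1)$-functor $\Sigma_{n-1}^*\fl\Tr_{n-1}$ together with the choice of an image $n$-cell for each generating $n$-cell of $\Sigma_n$, with the prescribed $(n-1)$-dimensional boundary; the images of the formal inverses and the relations $\mathrm{Inv}(\Sigma_n)$ are then forced, since $n$-cells of $\Tr$ are invertible.

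First I would build an $(n-1)$-functor $\tilde{F}\colon\Sigma_{n-1}^*\fl\Upsilon_{n-1}^*$ with $\pi_{\Upsilon}\circ\tilde{F}=\theta\circ\pi_{\Sigma}$. As $\pi_{\Sigma}$ and $\pi_{\Upsilon}$ only identify cells in dimension $n-1$, the $(n-2)$-truncations of $\cl{\Sigma}$ and $\Sigma_{n-1}^*$ agree, as do those of $\cl{\Upsilon}$ and $\Upsilon_{n-1}^*$, and on that truncation I take $\tilde{F}$ to be the corresponding restriction of $\theta$. For a generating $(n-1)$-cell $x\colon f\fl g$ of $\Sigma_{n-1}$, the $(n-1)$-cell $\theta(\cl{x})$ of $\cl{\Upsilon}$ is an equivalence class of $(n-1)$-cells of $\Upsilon_{n-1}^*$ from $\tilde{F}(f)$ to $\tilde{F}(g)$; I choose a representative and declare it to be $\tilde{F}(x)$. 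This data extends uniquely to an $(n-1)$-functor, and the identity $\pi_{\Upsilon}\tilde{F}=\theta\pi_{\Sigma}$ holds on generators, hence everywhere.

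Next, for a generating $n$-cell $\phi\colon f\fl g$ of $\Sigma_n$ we have $\cl{\tilde{F}(f)}=\theta(\cl{f})=\theta(\cl{g})=\cl{\tilde{F}(g)}$ in $\cl{\Upsilon}$, so, since parallel $(n-1)$-cells of $\Upsilon_{n-1}^*$ become equal in $\cl{\Upsilon}$ exactly when joined by an $n$-cell of $\tck{\Upsilon}$ (Section~\ref{subsectionHomotopy}), there is an $n$-cell $\tilde{F}(f)\fl\tilde{F}(g)$ in $\tck{\Upsilon}$, hence, via the quotient $\tck{\Upsilon}\fl\abtck{\Upsilon}$, in $\abtck{\Upsilon}$; I pick one and call it $F(\phi)$. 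By the universal property above, $\tilde{F}$ together with the cells $F(\phi)$ defines an $n$-functor $\tck{\Sigma}\fl\abtck{\Upsilon}$. It factors through $\abtck{\Sigma}$: the latter is the quotient of $\tck{\Sigma}$ by the $n$-spheres $(p\star_{n-1}q,\,q\star_{n-1}p)$ with $p,q$ closed $n$-cells sharing a base, and their images are closed $n$-cells of $\abtck{\Upsilon}$ with common base $\tilde{F}(s(p))$, which commute because $\abtck{\Upsilon}$ is abelian. This yields $F\colon\abtck{\Sigma}\fl\abtck{\Upsilon}$, and the first square commutes because in dimensions $\le n-1$ it is the equality $\pi_{\Upsilon}\tilde{F}=\theta\pi_{\Sigma}$ and in dimension $n$ both composites are trivial, $\cl{\Sigma}=\cl{\Upsilon}$ having only identity $n$-cells. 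Constructing $G$ is the same argument with the roles of $\Sigma$, $\Upsilon$ and of $\theta$, $\theta^{-1}$ exchanged.

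The only delicate step is the construction of $\tilde{F}$, where one must check that the representatives chosen at dimension $n-1$ really carry the prescribed lower-dimensional boundary; this works because $\theta$ restricts to an isomorphism of the $(n-2)$-truncations $\Sigma_{n-2}^*\cong\Upsilon_{n-2}^*$, so that sources and targets there are rigid and coincide with $\tilde{F}(f)$ and $\tilde{F}(g)$. Everything else is a routine application of the universal property of $\Cr(\Gamma)$ recalled in the first paragraph. Note finally that $F$ and $G$ are highly non-canonical, but since only their existence is asserted, no compatibility between them needs to be established.
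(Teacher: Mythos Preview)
Your proof is correct and follows essentially the same route as the paper. The paper simplifies notation by taking $\theta$ to be the identity and first lands $F$ in $\tck{\Upsilon}$ before passing to the abelianization, whereas you keep $\theta$ explicit and target $\abtck{\Upsilon}$ directly; but the substance---lift on generators via $\pi_{\Upsilon}^{-1}\theta\pi_{\Sigma}$, choose $n$-cells using that parallel $(n-1)$-cells with equal image are joined in the track category, then descend to the abelianizations because functors send closed cells to closed cells---is identical.
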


\begin{proof}
To simplify notations, we consider that the $(n-1)$-categories $\cl{\Sigma}$ and $\cl{\Upsilon}$ are equal, instead of simply isomorphic. Let us build $F$, the construction of $G$ being symmetric. 

First, we define an $n$-functor $F$ from~$\tck{\Sigma}$ to $\tck{\Upsilon}$. On $i$-cells, with $i\leq n-2$, $F$ is the identity, which makes the diagram commute up to dimension $n-2$ since $\pi_{\Sigma}$ and $\pi_{\Upsilon}$ are also identities on the same dimensions. 

If $a$ is an $(n-1)$-cell in $\Sigma$, we arbitrarily choose an $(n-1)$-cell in $\pi_{\Upsilon}^{-1}\pi_{\Sigma}(a)$ for $F(a)$. Since $F$ is the identity up to dimension $n-2$, we have that the source and target of $F(a)$ are equal to the source and target of $a$, respectively.

Then, $F$ is extended to any $(n-1)$-cell of $\tck{\Sigma}$ by functoriality. Let $\phi:u\fl v$ be an $n$-cell of $\Sigma$. We have, by definition of $F(u)$ and $F(v)$:
\[
\pi_{\Upsilon} \circ F(u) \:=\: \pi_{\Sigma}(u) \:=\: \pi_{\Sigma}(v) \:=\: \pi_{\Upsilon} \circ F(v).
\]
Thus, there exists an $n$-cell from $F(u)$ to $F(v)$ in $\tck{\Sigma}$. We arbitrarily choose $F(\phi)$ to be one of those $n$-cells and, then, we extend $F$ to any $n$-cell of $\tck{\Sigma}$ by functoriality. 

Let $f$ and $g$ be closed $n$-cells in $\tck{\Sigma}$. We have $F(f\star_{n-1} g) = F(f)\star_{n-1} F(g)$ by definition of $F$. As a consequence, $F$ induces a $n$-functor from $\abtck{\Sigma}$ to $\abtck{\Upsilon}$ that satisfies, by construction, the relation $\pi_{\Upsilon}\circ F=\pi_{\Sigma}$. 
\end{proof}

\begin{notation}
We fix two Tietze-equivalent $n$-polygraphs $\Sigma$ and $\Upsilon$, together with $n$-functors $F$ and~$G$ as in Lemma~\ref{LemmaFunctorsTietzeEquivalent}. We denote by $\tilde{G}$ the morphism of natural systems on $\cl{\Sigma}=\cl{\Upsilon}$, from $\Pi(\Upsilon)$ to $\Pi(\Sigma)$, defined by $\tilde{G}(\iar{f})=\iar{G(f)}$.

For every $(n-1)$-cell $w$ in $\abtck{\Sigma}$, we define an $n$-cell $\Lambda_w$ from $w$ to $GF(w)$ in $\abtck{\Sigma}$, by structural induction on $w$. If $w$ is an identity, then $\Lambda_w = 1_w$. Now, let $w$ be an $(n-1)$-cell in $\Sigma$. By hypothesis on~$F$ and $G$, we have:
\[
\pi_{\Sigma} \circ GF(w) \:=\: \pi_{\Upsilon} \circ F(w) \:=\: \pi_{\Sigma} (w).
\]
As a consequence, there exists an $n$-cell from $w$ to $GF(w)$ in $\abtck{\Sigma}$ and we arbitrarily choose $\Lambda_w$ to be such an $n$-cell. Finally, if $w=w_1\star_i w_2$, for some $i\in\ens{0,\dots,n-2}$, then  $\Lambda_w = \Lambda_{w_1}\star_i\Lambda_{w_2}$. If $f:u\fl v$ is an $n$-cell of $\abtck{\Sigma}$, we denote by $\Lambda_f$ the closed $n$-cell with basis $u$ defined by:
\[
\Lambda_f \:=\: f \star_{n-1} \Lambda_v \star_{n-1} GF(f)^- \star_{n-1} \Lambda_u^-.
\]
Finally, we define:
\[
\Lambda_{\Sigma} \:=\: \ens{ \; \iar{\Lambda_{\phi}} \;\big|\; \phi\in\Sigma_n \;}.
\]
\end{notation}

\begin{lemma}
Let $f$ be an $n$-cell in $\abtck{\Sigma}$ with a decomposition
\[
f \:=\: C_1[\phi_1^{\epsilon_1}]\star_{n-1} \dots \star_{n-1} C_k[\phi_k^{\epsilon_k}],
\]
with $\phi_i \in \Sigma_n$, $\epsilon_i \in \ens{-,+}$ and $C_i \in \Wk\Sigma^*$. Then we have:
\begin{equation}
\label{claim1}
\iar{\Lambda_f} \:=\: \sum_{i=1}^k \epsilon_i C_i\iar{\Lambda_{\phi_i}}.
\end{equation}
\end{lemma}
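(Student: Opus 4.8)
The plan is to deduce~\eqref{claim1} by induction on the length~$k$, after isolating three elementary properties of the assignment $f\mapsto\iar{\Lambda_f}$: \textbf{(a)} additivity, $\iar{\Lambda_{f\star_{n-1}g}}=\iar{\Lambda_f}+\iar{\Lambda_g}$ for composable $n$-cells $f$ and~$g$ of $\abtck\Sigma$; \textbf{(b)} $\iar{\Lambda_{f^-}}=-\iar{\Lambda_f}$ and $\iar{\Lambda_{1_u}}=0$; and \textbf{(c)} $\iar{\Lambda_{C[\phi]}}=C\iar{\Lambda_\phi}$ for a whisker~$C$ of $\Sigma^*$ and a generating $n$-cell $\phi\in\Sigma_n$. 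The basic tool throughout is the \emph{conjugation invariance} of $\iar{\cdot}$: for a closed $n$-cell~$h$ of $\abtck\Sigma$ with base~$a$ and any $n$-cell $p\colon b\fl a$, one has $\iar{p\star_{n-1}h\star_{n-1}p^-}=\iar{h}$. This follows from the two defining relations of $\Pi(\Sigma)$, exactly as in the treatment of the second relation in the proof of Theorem~\ref{LinearTrackExtension}: one rewrites the left-hand cell as $(p\star_{n-1}h)\star_{n-1}p^-$, applies the second defining relation and then the first, using that $\iar{1_a}=\iar{1_a\star_{n-1}1_a}=2\iar{1_a}$, hence $\iar{1_a}=0$.

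For \textbf{(a)}, I would unfold the definition of~$\Lambda$ and use that $GF$ is an $n$-functor to verify, in $\abtck\Sigma$, the identity $\Lambda_{f\star_{n-1}g}=\Lambda_f\star_{n-1}\bigl(e\star_{n-1}\Lambda_g\star_{n-1}e^-\bigr)$ with $e=\Lambda_{s(f)}\star_{n-1}GF(f)\star_{n-1}\Lambda_{t(f)}^-$, all the $\Lambda$- and $GF$-terms attached to $t(f)=s(g)$ cancelling in pairs. Since $\Lambda_f$ and $e\star_{n-1}\Lambda_g\star_{n-1}e^-$ are closed $n$-cells with the same base, the first defining relation and conjugation invariance then give \textbf{(a)}. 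Property \textbf{(b)} follows at once: $\Lambda_{1_u}=1_u\star_{n-1}\Lambda_u\star_{n-1}1_{GF(u)}\star_{n-1}\Lambda_u^-=1_u$, so $\iar{\Lambda_{1_u}}=\iar{1_u}=0$, and applying \textbf{(a)} to $f\star_{n-1}f^-=1_{s(f)}$ yields $\iar{\Lambda_{f^-}}=-\iar{\Lambda_f}$.

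The heart of the matter is~\textbf{(c)}, where I would show that, for every whisker~$C$ of $\Sigma^*$ and every $n$-cell $\psi\colon a\fl b$ of $\abtck\Sigma$, the closed $n$-cells $\Lambda_{C[\psi]}$ and $C[\Lambda_\psi]$ coincide in $\abtck\Sigma$ --- or at worst are conjugate, which by conjugation invariance is equally good ($\Lambda_\psi$ is closed with base~$a$, so $C[\Lambda_\psi]$ makes sense, and both cells have base $C[a]$). Using that whiskers distribute over $\star_{n-1}$ one writes $C[\Lambda_\psi]=C[\psi]\star_{n-1}C[\Lambda_b]\star_{n-1}C[GF(\psi)]^-\star_{n-1}C[\Lambda_a]^-$, to be compared with $\Lambda_{C[\psi]}=C[\psi]\star_{n-1}\Lambda_{C[b]}\star_{n-1}GF(C[\psi])^-\star_{n-1}\Lambda_{C[a]}^-$; by the structural definition of~$\Lambda$ on $(n-1)$-cells, functoriality of $GF$, and the exchange relations, the comparison reduces to the fact that the contributions of the frame cells of~$C$ cancel against one another, leaving exactly the contributions of the hole. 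Once this is known, taking the whisker~$C$ itself --- legitimate since $\abtck\Sigma$ and $\Sigma^*$ have the same cells in dimensions below~$n$ --- as a representative of the context $\cl C$ of $\cl\Sigma$ in the definition of the action of $\Pi(\Sigma)$ gives $C\iar{\Lambda_\phi}=\iar{C[\Lambda_\phi]}=\iar{\Lambda_{C[\phi]}}$; the case of $\phi^-$ follows from $C[\phi^-]=C[\phi]^-$, \textbf{(b)}, and the fact that $C$ acts by a group morphism, giving $\iar{\Lambda_{C[\phi^{\epsilon}]}}=\epsilon\,C\iar{\Lambda_\phi}$ for $\epsilon\in\ens{-,+}$.

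Granting \textbf{(a)}, \textbf{(b)} and \textbf{(c)}, the induction is immediate: for $k=1$ the claim is~\textbf{(c)}, and for $k\geq 2$, writing $f=C_1[\phi_1^{\epsilon_1}]\star_{n-1}f'$ with $f'=C_2[\phi_2^{\epsilon_2}]\star_{n-1}\cdots\star_{n-1}C_k[\phi_k^{\epsilon_k}]$, property~\textbf{(a)} gives $\iar{\Lambda_f}=\iar{\Lambda_{C_1[\phi_1^{\epsilon_1}]}}+\iar{\Lambda_{f'}}$, to which~\textbf{(c)} and the induction hypothesis apply. I expect the identity $\Lambda_{C[\phi]}=C[\Lambda_\phi]$ of~\textbf{(c)} to be the only genuine obstacle: making the cancellation of the frame contributions rigorous requires an induction on the structure of the whisker~$C$ and careful bookkeeping with the exchange relations, whereas~\textbf{(a)}, \textbf{(b)} and the final induction are short manipulations with the defining relations of $\Pi(\Sigma)$.
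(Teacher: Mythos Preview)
Your proposal is correct and follows essentially the same route as the paper: the paper first establishes your~\textbf{(a)} by computing $\Lambda_{f\star_{n-1}g}=f\star_{n-1}\Lambda_g\star_{n-1}f^-\star_{n-1}\Lambda_f$ (which is exactly your formula, since $\Lambda_f\star_{n-1}e=f$ and $e^-=f^-\star_{n-1}\Lambda_f$), then proves your~\textbf{(c)} as an equality $\Lambda_{C[f]}=C[\Lambda_f]$ by checking the two elementary cases $\Lambda_{u\star_i f}=u\star_i\Lambda_f$ and $\Lambda_{f\star_i v}=\Lambda_f\star_i v$ via the exchange relations, and concludes by induction on~$k$. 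Your anticipated ``induction on the structure of the whisker with careful bookkeeping'' is precisely this two-case computation; no conjugation is needed there, the identity holds on the nose.
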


\begin{proof}
Let $f:u\fl v$ and $g:v\fl w$ be $n$-cells in $\abtck{\Sigma}$. We have:
\begin{align*}
\Lambda_{f\star_{n-1} g} \:
	&=\: (f\star_{n-1} g) \star_{n-1} \Lambda_w \star_{n-1} GF(f\star_{n-1} g)^- \star_{n-1} \Lambda^-_u \\
	&=\: f \star_{n-1} \left(g \star_{n-1} \Lambda_w \star_{n-1} GF(g)^- \star_{n-1} \Lambda ^-_v \right)
		\star_{n-1} \Lambda_v \star_{n-1} GF(f)^- \star_{n-1} \Lambda^-_u \\
	&=\: f \star_{n-1} \Lambda_g \star_{n-1} \Lambda_v \star_{n-1} GF(f)^- \star_{n-1} \Lambda^-_u \\
	&=\: f \star_{n-1} \Lambda_g \star_{n-1} f^- \star_{n-1} \Lambda_f. 
\end{align*}
Hence:
\begin{equation}
\label{claim1_1}
\iar{\Lambda_{f\star_{n-1} g}} \:=\: 
\iar{f \star_{n-1} \Lambda_g \star_{n-1} f^- \star_{n-1} \Lambda_f} \:=\: 
\iar{\Lambda_f}+\iar{\Lambda_g}.
\end{equation}
Now, let $f:w\fl w'$ be an $n$-cell and $u$ be an $i$-cell, $i\leq n-1$, of $\abtck{\Sigma}$ such that $u\star_iw$ is defined. Then we have:
\begin{align*}
\Lambda_{u\star_i f} \:
	&=\: (u\star_i f) \star_{n-1} \Lambda_{u\star_iw'} \star_{n-1} GF(u\star_i f)^- \star_{n-1} \Lambda_{u\star_i w}^- \\
	&=\: (u\star_i f) \star_{n-1} (\Lambda_u\star_i \Lambda_{w'}) \star_{n-1} (GF(u)\star_i GF(f)^-) 
		\star_{n-1} (\Lambda_u^-\star_i\Lambda_w^-) \\
	&=\: (u\star_{n-1}\Lambda_u\star_{n-1}GF(u)\star_{n-1}\Lambda_u^-) 
		\star_i (f \star_{n-1} \Lambda_{w'} \star_{n-1} GF(f)^- \star_{n-1} \Lambda_w^-) \\
	&=\: u\star_i \Lambda_f.
\end{align*}
Similarly, we prove that $\Lambda_{f\star_i v}=\Lambda_{f}\star_i v$ if $v$ is an $i$-cell, $i\leq n-1$, such that $w\star_i v$ is defined. As a consequence, we get $\Lambda_{C[f]}=C[\Lambda_f]$, for every whisker $C$ of $\Sigma^*$, hence:
\begin{equation}
\label{claim1_2}
\iar{\Lambda_{C[f]}} \:=\: C\iar{\Lambda_f}.
\end{equation}
We prove~\eqref{claim1} by induction on $k$, using~\eqref{claim1_1} and~\eqref{claim1_2}.
\end{proof}

\begin{lemma}\label{LemmaTransportGenerators}
Let $\Br$ be a generating set for the natural system $\Pi(\Upsilon)$. 
Then the set $\Lambda_{\Sigma} \amalg \tilde{G}(\Br)$ is a
generating set for the natural system $\Pi(\Sigma)$.
\end{lemma}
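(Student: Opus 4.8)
The plan is to show that every generating element $\iar{f}$ of $\Pi(\Sigma)_w$, with $f$ a closed $n$-cell of $\abtck{\Sigma}$ based at some representative of $w$, lies in the sub-natural-system generated by $\Lambda_{\Sigma} \amalg \tilde{G}(\Br)$. The starting point is the identity $\iar{\Lambda_f} = \iar{f} + \iar{\Lambda_v} + \iar{GF(f)^-} + \iar{\Lambda_u^-}$ obtained by applying relation (i) to the defining formula $\Lambda_f = f \star_{n-1} \Lambda_v \star_{n-1} GF(f)^- \star_{n-1} \Lambda_u^-$; when $f$ is closed, $u=v$, so $\Lambda_u$ and $\Lambda_u^-$ cancel in the abelian group, giving $\iar{f} = \iar{\Lambda_f} - \iar{GF(f)}$. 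So it suffices to show that both $\iar{\Lambda_f}$ and $\iar{GF(f)}$ belong to the sub-natural-system in question.

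For $\iar{\Lambda_f}$: decompose $f = C_1[\phi_1^{\epsilon_1}] \star_{n-1} \dots \star_{n-1} C_k[\phi_k^{\epsilon_k}]$ with $\phi_i \in \Sigma_n$, $C_i \in \Wk{\Sigma^*}$, which is possible by the decomposition of $(n+1)$-cells recalled in \ref{NaturalSystems} (adapted to the track setting). Then the previous lemma, equation~\eqref{claim1}, gives $\iar{\Lambda_f} = \sum_{i=1}^k \epsilon_i C_i\iar{\Lambda_{\phi_i}}$, and each $\iar{\Lambda_{\phi_i}}$ is an element of $\Lambda_{\Sigma}$ by definition, so $\iar{\Lambda_f}$ is a combination (with context actions) of elements of $\Lambda_{\Sigma}$. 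For $\iar{GF(f)}$: the $n$-cell $GF(f)$ is a closed $n$-cell of $\abtck{\Sigma}$, but more to the point $F(f)$ is a closed $n$-cell of $\abtck{\Upsilon}$, so $\iar{F(f)}$ is a generating element of $\Pi(\Upsilon)$; since $\Br$ generates $\Pi(\Upsilon)$, we can write $\iar{F(f)} = \sum_j \epsilon_j C'_j\iar{\beta_j}$ with $\beta_j \in \Br$. Applying $\tilde G$, which is a morphism of natural systems (it commutes with the context actions because $G$ is an $n$-functor compatible with $\pi$), yields $\iar{GF(f)} = \tilde G(\iar{F(f)}) = \sum_j \epsilon_j C''_j \tilde G(\iar{\beta_j})$, a combination of elements of $\tilde G(\Br)$. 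Hence $\iar{f} = \iar{\Lambda_f} - \iar{GF(f)}$ lies in the sub-natural-system generated by $\Lambda_{\Sigma} \amalg \tilde G(\Br)$, and since such $\iar{f}$ generate $\Pi(\Sigma)$, we are done.

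The main point requiring care is the compatibility of $\tilde G$ with the context actions, i.e.\ that $\tilde G$ really is a well-defined morphism of natural systems on $\cl{\Sigma}=\cl{\Upsilon}$ and not merely a family of group morphisms: one must check $\tilde G(C\iar{f}) = C\tilde G(\iar{f})$ for a context $C$ of $\cl{\Upsilon}$, which follows from choosing a representative $B$ of $C$ in $\Upsilon^*$, noting $G(B[f]) = G(B)[G(f)]$ by functoriality of $G$, and observing that $G(B)$ represents $C$ in $\Sigma^*$ as well since $\pi_{\Sigma}\circ G = \pi_{\Upsilon}$. The other mild subtlety is that the relation $\iar{f} = \iar{\Lambda_f} - \iar{GF(f)}$ uses the abelian-group structure of $\Pi(\Sigma)_w$ together with relation (i) applied to a three-fold composite; expanding $\iar{a\star_{n-1}b\star_{n-1}c} = \iar a + \iar b + \iar c$ and $\iar{h^-} = -\iar h$ (the latter from $\iar{h\star_{n-1}h^-} = \iar{1} = 0$) makes this routine.
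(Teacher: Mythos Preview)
Your overall strategy matches the paper's exactly: write $\iar{f}=\iar{\Lambda_f}+\iar{GF(f)}$, expand $\iar{\Lambda_f}$ via the previous lemma as a $\Lambda_\Sigma$-combination, and expand $\iar{GF(f)}=\tilde G(\iar{F(f)})$ as a $\tilde G(\Br)$-combination. The remarks on $\tilde G$ being a morphism of natural systems are fine.

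There is, however, a genuine gap in your derivation of the key identity. You write
\[
\iar{\Lambda_f}=\iar{f}+\iar{\Lambda_v}+\iar{GF(f)^-}+\iar{\Lambda_u^-}
\]
by ``applying relation (i)'' to the four-fold composite. But relation (i) only applies to composites of \emph{closed} $n$-cells with the \emph{same} base point, and $\Lambda_w:w\to GF(w)$ is not closed; the symbols $\iar{\Lambda_w}$ and $\iar{\Lambda_w^-}$ are simply undefined in $\Pi(\Sigma)$. Your last paragraph acknowledges a ``mild subtlety'' here but then restates the same invalid expansion $\iar{a\star_{n-1}b\star_{n-1}c}=\iar a+\iar b+\iar c$. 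The fix is to group the factors correctly and use relation (ii): for $f$ closed at $w$,
\[
\Lambda_f \:=\: f \star_{n-1} \big(\Lambda_w \star_{n-1} GF(f)^- \star_{n-1} \Lambda_w^-\big),
\]
where both bracketed pieces are closed at $w$; then (i) gives $\iar{\Lambda_f}=\iar f+\iar{\Lambda_w\star_{n-1}GF(f)^-\star_{n-1}\Lambda_w^-}$, and (ii) lets you cycle the second term to $\iar{GF(f)^-}=-\iar{GF(f)}$. This is precisely what the paper does (it rewrites $f=\Lambda_f\star_{n-1}\Lambda_w\star_{n-1}GF(f)\star_{n-1}\Lambda_w^-$ and uses the same cycling). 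Note also that the correct sign is $\iar f=\iar{\Lambda_f}+\iar{GF(f)}$, not minus; this does not affect generation, but your stated formula is off.
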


\begin{proof}
Let $f$ be a closed $n$-cell with basis $w$ in $\tck{\Sigma}$. By definition of $\Lambda_f$, we have:
\[
\iar{f} 
	\:=\: \iar{ \Lambda_f\star_{n-1}\Lambda_w\star_{n-1}GF(f)\star_{n-1}\Lambda_w^-}
	\:=\: \iar{\Lambda_f} + \iar{GF(f)}.
\]
On the one hand, we consider a decomposition of $f$ in generating $n$-cells of $\Sigma_n$: 
\[
f \:=\: C_1[\phi_1^{\epsilon_1}] \star_{n-1} \dots \star_{n-1} C_k[\phi_k^{\epsilon_k}].
\]
Hence:
\[
\iar{\Lambda_f} \:=\: \sum_{i=1}^k \epsilon_i C_i \iar{\Lambda_{\phi_i}}.
\]
On the other hand, the natural system $\Pi(\Upsilon)$ is generated by $\Br$, so that $\iar{F(f)}$ admits a decomposition $\iar{F(f)} = \sum_{j\in J} \eta_j B_j\iar{g_j}$, with $\iar{g_j}\in \Br$. Hence:
\[
\iar{GF(f)} 
	\:=\: \sum_{j\in J} B_j \iar{G(g_j)} 
	\:=\: \sum_{j\in J} B_j [\tilde{G}(\iar{g_j})].
\]
Thus, $\iar{f}$ can be written as a linear combination of elements of $\Lambda_{\Sigma}$ and of $\Br$, proving the result.
\end{proof}

\begin{proposition}\label{lemmaFinitelyGenerated}
Let $\Sigma$ and $\Upsilon$ be two Tietze-equivalent $n$-polygraphs such that $\Sigma_n$ and $\Upsilon_n$ are finite. Then the natural system $\Pi(\Sigma)$ is finitely generated if and only if the natural system $\Pi(\Upsilon)$ is finitely generated. 
\end{proposition}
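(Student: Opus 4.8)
The plan is to read the statement off directly from Lemma~\ref{LemmaTransportGenerators}, using that the ``correction family'' $\Lambda_{\Sigma}$ is finite whenever $\Sigma_n$ is.

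Since the statement is symmetric in $\Sigma$ and $\Upsilon$ — Tietze-equivalence is a symmetric relation, the hypothesis ``$\Sigma_n$ and $\Upsilon_n$ finite'' is symmetric, and the $n$-functors $F$, $G$ of Lemma~\ref{LemmaFunctorsTietzeEquivalent} were constructed in a symmetric way — it is enough to prove one implication. So first I would assume that $\Pi(\Upsilon)$ is finitely generated and fix a \emph{finite} generating set $\Br$ for the natural system $\Pi(\Upsilon)$.

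Then, by Lemma~\ref{LemmaTransportGenerators}, the set $\Lambda_{\Sigma}\amalg\tilde{G}(\Br)$ is a generating set for the natural system $\Pi(\Sigma)$. It remains to check that this set is finite. On the one hand, $\tilde{G}(\Br)$ is the image of the finite set $\Br$ under the morphism of natural systems $\tilde{G}$, hence finite. On the other hand, $\Lambda_{\Sigma}=\ens{\iar{\Lambda_{\phi}}\mid\phi\in\Sigma_n}$ is indexed by the set $\Sigma_n$ of top-dimensional generators of $\Sigma$, which is finite by hypothesis; so $\Lambda_{\Sigma}$ is finite as well. Hence $\Pi(\Sigma)$ admits a finite generating set and is finitely generated. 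Running the same argument with $\Sigma$ and $\Upsilon$ interchanged yields the converse implication, completing the proof.

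I do not expect any genuine obstacle: all the work is already done in Lemma~\ref{LemmaFunctorsTietzeEquivalent}, the lemma establishing~\eqref{claim1}, and Lemma~\ref{LemmaTransportGenerators}. The single point deserving a moment of care is the finiteness of $\Lambda_{\Sigma}$, which is immediate once one notes that it is indexed by $\Sigma_n$ and that each $\iar{\Lambda_{\phi}}$ is a well-defined element of $\Pi(\Sigma)$ after the relevant (finitely many) choices of the $n$-cells $\Lambda_w$ have been made.
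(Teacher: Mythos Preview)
Your proof is correct and follows exactly the paper's approach: the paper's proof is the single sentence ``We use Lemma~\ref{LemmaTransportGenerators} with $\Br$ and $\Sigma_n$ finite,'' and you have simply unpacked that sentence, making explicit the finiteness of $\Lambda_{\Sigma}$ and of $\tilde{G}(\Br)$ and the symmetric role of $\Sigma$ and $\Upsilon$.
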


\begin{proof}
We use Lemma~\ref{LemmaTransportGenerators} with $\Br$ and $\Sigma_n$ finite.
\end{proof}

%%%%%%%%%%%%%%%%%%%%%%%%%%%%%%%%%%%%%%%%
\subsection{Generating identities among relations}
\label{GeneratingIdentitiesAmongRelations} 
%%%%%%%%%%%%%%%%%%%%%%%%%%%%%%%%%%%%%%%%

\begin{theorem}
\label{mainTheorem2}
If an $n$-polygraph $\Sigma$ has finite derivation type then the natural system $\Pi(\Sigma)$ is finitely generated. 
\end{theorem}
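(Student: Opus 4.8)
The strategy is to reduce the general case to the case of a convergent polygraph, then to extract the generators of $\Pi(\Sigma)$ from the critical pairs. If $\Sigma$ has finite derivation type, then in particular $\Sigma_n$ is finite, and we may replace $\Sigma$ by any Tietze-equivalent $n$-polygraph with finite underlying $(n-1)$-polygraph without changing whether $\Pi(\Sigma)$ is finitely generated, by Proposition~\ref{lemmaFinitelyGenerated}. By the standard completion procedure (Knuth--Bendix, available in the polygraphic setting), the existence of a finite homotopy basis lets one produce a \emph{finite convergent} $n$-polygraph $\Upsilon$ that is Tietze-equivalent to $\Sigma$: one orients and completes, using the finite homotopy basis to guarantee that the completion terminates. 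So it suffices to prove the theorem for a finite convergent $n$-polygraph.

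For a finite convergent $\Upsilon$, the plan is to show that the set $\Br = \ens{\iar{A_{f,g}} \mid (f,g) \text{ a critical branching of } \Upsilon}$, where $A_{f,g}$ is a generating confluence filling the confluence diagram of $(f,g)$, generates $\Pi(\Upsilon)$. Since $\Upsilon$ is finite and convergent, it has finitely many critical branchings, so $\Br$ is finite. By Lemma~\ref{SquierConfluenceLemma}, the generating confluences $\widetilde{\Br}$ form a homotopy basis of $\Upsilon^{\top}$. Now take an arbitrary generator $\iar{f}$ of $\Pi(\Upsilon)_w$, i.e.\ a closed $n$-cell $f$ with $\cl{s(f)}=w$ in $\abtck{\Upsilon}$ (or a lift of it to $\tck{\Upsilon}$). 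By Lemma~\ref{LemmaHomotopiesBoucles}, applied to the track $n$-category $\tck{\Upsilon}$ with the family of closed $n$-cells $\ens{s(A_{f,g})\star_{n-1}t(A_{f,g})^-}$ associated to $\widetilde{\Br}$, $f$ decomposes as an $(n-1)$-composite of conjugates $g_i\star_{n-1}C_i[\beta_i^{\epsilon_i}]\star_{n-1}g_i^-$ with $\beta_i$ ranging over that family. Pushing this decomposition through the quotient $\tck{\Upsilon}\to\abtck{\Upsilon}$ and then through the generator map $\iar{\cdot}$, and using relation (i) of $\Pi$ to turn the $(n-1)$-composite into a sum, relation (ii) together with the conjugation-invariance established in the proof of Theorem~\ref{LinearTrackExtension} to kill the conjugating factors $g_i,g_i^-$, and the compatibility of $\iar{\cdot}$ with contexts (the action $\Pi(\Upsilon)_{C_i}$), one obtains $\iar{f}=\sum_i \epsilon_i \, (C_i)\iar{\beta_i}$ — a linear combination of elements of $\Br$ and their context-translates. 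Hence $\Br$ generates $\Pi(\Upsilon)$, which is therefore finitely generated; by Proposition~\ref{lemmaFinitelyGenerated} again, so is $\Pi(\Sigma)$.

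The main obstacle is the first reduction: producing a finite convergent $\Upsilon$ Tietze-equivalent to $\Sigma$ from the mere hypothesis that $\Sigma^{\top}$ has a finite homotopy basis. In dimension $n=2$ this is classical (a monoid with finite derivation type is presented by a finite convergent rewriting system is \emph{false} in general — finite derivation type does not imply the existence of a finite convergent presentation). So one cannot argue this way. The correct route is instead to work directly with $\Sigma$: one does not convert to a convergent presentation, but rather uses the given finite homotopy basis $\Br_0$ of $\Sigma^{\top}$ directly. The key point to establish is that if $\Br_0$ is \emph{any} finite homotopy basis of $\Sigma^{\top}$, written as $\widetilde{\Br_0}$ for a finite family $\Br_0$ of closed $n$-cells, then $\ens{\iar{\beta}\mid\beta\in\Br_0}$ generates $\Pi(\Sigma)$. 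This is exactly the computation sketched in the previous paragraph, but now invoking Lemma~\ref{LemmaHomotopiesBoucles} with the family $\Br_0$ itself in place of the generating confluences. So the real plan is: take a finite homotopy basis $\Br_0$ of $\Sigma^{\top}$; apply Lemma~\ref{LemmaHomotopiesBoucles} to express every closed $n$-cell of $\abtck{\Sigma}$ as an $(n-1)$-composite of conjugated, contextualized copies of elements of $\Br_0^{\pm}$; apply $\iar{\cdot}$ and use relations (i), (ii) and conjugation-invariance to see that $\ens{\iar{\beta}}_{\beta\in\Br_0}$ generates $\Pi(\Sigma)_w$ for every $w$; conclude finiteness. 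The delicate point in carrying this out is that $\iar{\cdot}$ is only directly defined on closed $n$-cells of $\abtck{\Sigma}$, so one must be careful that each term $g_i\star_{n-1}C_i[\beta_i^{\epsilon_i}]\star_{n-1}g_i^-$ is genuinely closed (it is, since $\beta_i$ is closed), and that the conjugation by $g_i$ is handled by the lemma used in the proof of Theorem~\ref{LinearTrackExtension} stating $(\cdot)^g=(\cdot)^h$ in the abelian case, which translates into $\iar{g_i\star_{n-1}C_i[\beta_i]\star_{n-1}g_i^-}=\iar{C_i[\beta_i]}$ inside $\Pi(\Sigma)$.
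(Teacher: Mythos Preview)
Your final approach, after you correctly abandon the reduction to a convergent presentation, is essentially the paper's proof: take a finite homotopy basis, convert it to a finite family of closed $n$-cells via $\beta\mapsto s(\beta)\star_{n-1}t(\beta)^-$, invoke Lemma~\ref{LemmaHomotopiesBoucles} to decompose an arbitrary closed $n$-cell as a composite of conjugated, whiskered copies of these, and use the defining relations (i) and (ii) of $\Pi(\Sigma)$ to turn this into $\iar{f}=\sum_i\epsilon_i C_i\iar{\beta_i}$. The only cosmetic difference is that the paper first applies Proposition~\ref{TDFabelien_TDF} to transfer the finite homotopy basis from $\tck{\Sigma}$ to $\abtck{\Sigma}$ and then runs Lemma~\ref{LemmaHomotopiesBoucles} \emph{inside} $\abtck{\Sigma}$, whereas you decompose in $\tck{\Sigma}$ and push through the quotient; since the abelianization is the identity on $(n-1)$-cells, closed $n$-cells lift and the two routes coincide. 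You were right to discard your first plan: finite derivation type does not in general yield a finite convergent Tietze-equivalent presentation, so one cannot reduce to Proposition~\ref{PropgeneratorPi_2}.
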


\begin{proof}
Let us assume that the $n$-polygraph $\Sigma$ has finite derivation type. By Proposition~\ref{TDFabelien_TDF}, the abelian track category $\abtck{\Sigma}$ has finite derivation type. Let $\Br$ be a finite homotopy basis of $\abtck{\Sigma}$ and let $\tilde{B}$ be the set of closed $n$-cells of $\abtck{\Sigma}$ defined by:
\[
\tilde{\Br} \:=\: \ens{ \; s(\beta)\star_{n-1} t(\beta)^- \; \big| \; \beta\in \Br \;}.
\]
By Lemma~\ref{LemmaHomotopiesBoucles}, any closed $n$-cell $f$ in $\abtck{\Sigma}$ can be written 
\[
f \:=\: \big( g_1\star_{n-1} C_1[\beta_1^{\epsilon_1}] \star_{n-1} g_1^- \big) 
	\star_{n-1}\dots\star_{n-1}
	\big( g_k\star_{n-1} C_k[\beta_k^{\epsilon_k}] \star_{n-1} g_k^- \big),
\]
where, for every $i$ in $\ens{1,\dots,k}$, $\beta_i \in \widetilde{\Br}$, $\epsilon_i \in \ens{-,+}$, $C_i \in \Wk\Sigma^*$ and $g_i \in \Sigma^*_n$. As a consequence, for any identity among relations $\iar{f} $ in $\Pi(\Sigma)$, we have: 
\[
\iar{f} 
	\:=\: \sum_{i=1}^k \epsilon_i \iar{g_i \star_{n-1} C_i[\beta_i] \star_{n-1} g_i^-}
	\:=\: \sum_{i=1}^k \epsilon_i C_i\iar{\beta_i}.
\]
Thus, the elements of $\iar{\tilde{B}}$ form a generating set for $\Pi(\Sigma)$.
\end{proof}

\begin{proposition}\label{PropgeneratorPi_2}
For a convergent $n$-polygraph $\Sigma$, the natural system $\Pi(\Sigma)$ is generated by the generating confluences of $\Sigma$.
\end{proposition}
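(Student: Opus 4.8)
The plan is to run the argument of Theorem~\ref{mainTheorem2} again, feeding it the homotopy basis of generating confluences furnished by Squier's fundamental confluence lemma in place of an arbitrary finite one. Convergence does not imply finite derivation type, so the resulting generating family of $\Pi(\Sigma)$ need not be finite; however, nothing in the generation part of the proof of Theorem~\ref{mainTheorem2} uses finiteness, so this is harmless.

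\textbf{First step.} I would check that the generating confluences give a homotopy basis of $\abtck\Sigma$. By Lemma~\ref{SquierConfluenceLemma}, the set $\Gamma$ of generating confluences of the convergent $n$-polygraph $\Sigma$ is a homotopy basis of $\tck\Sigma$. Since $\abtck\Sigma$ is the quotient of $\tck\Sigma$ by the cellular extension whose cells are the $n$-spheres $(f\star_{n-1} g,\, g\star_{n-1} f)$ for $f,g$ closed $n$-cells of $\tck\Sigma$ with the same base, and since the construction carried out in the proof of Lemma~\ref{quotientFDT} turns any homotopy basis of a track $n$-category into one of an arbitrary quotient of it (finiteness playing no role there), the image $\cl\Gamma$ of $\Gamma$ is a homotopy basis of $\abtck\Sigma$.

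\textbf{Second step.} I would then extract a generating set of $\Pi(\Sigma)$ exactly as in Theorem~\ref{mainTheorem2}. To each $\gamma\in\Gamma$, arising from a critical branching $(f,g)$ with a chosen confluence $(f',g')$, associate the closed $n$-cell
\[
\tilde\gamma \:=\: (f\star_{n-1} f')\star_{n-1}(g\star_{n-1} g')^-
\]
of $\abtck\Sigma$, based at the common source of $f$ and $g$, and the element $\iar{\tilde\gamma}$ of $\Pi(\Sigma)$; this is the identity among relations carried by the generating confluence $\gamma$. By Lemma~\ref{LemmaHomotopiesBoucles} applied to the homotopy basis $\cl\Gamma$, every closed $n$-cell $h$ of $\abtck\Sigma$ decomposes as a composite of conjugates $g_i\star_{n-1} C_i[\tilde{\gamma_i}^{\epsilon_i}]\star_{n-1} g_i^-$ with $\gamma_i\in\Gamma$, $\epsilon_i\in\ens{-,+}$, $C_i\in\Wk\Sigma^*$ and $g_i$ an $n$-cell of $\abtck\Sigma$. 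Mapping this equality into $\Pi(\Sigma)$ and using the defining relations i) and ii) of $\Pi$ as in the proof of Theorem~\ref{mainTheorem2}, one obtains
\[
\iar h \:=\: \sum_{i=1}^k \epsilon_i\, C_i\iar{\tilde{\gamma_i}},
\]
so the family $\ens{\,\iar{\tilde\gamma}\mid\gamma\in\Gamma\,}$ generates $\Pi(\Sigma)$, which is the claim.

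\textbf{Main obstacle.} I do not expect a genuine obstacle: the proposition is essentially a corollary of Lemmas~\ref{SquierConfluenceLemma} and~\ref{LemmaHomotopiesBoucles}, which already contain all the substantive work. The two points needing a little care are the descent from $\tck\Sigma$ to $\abtck\Sigma$ in the first step --- i.e.\ checking that Squier's homotopy basis survives abelianization --- and the identification in the second step of the element of $\Pi(\Sigma)$ attached to a generating confluence with $\iar{\tilde\gamma}$; both become routine once the representatives $f,g,f',g'$ and their images in $\abtck\Sigma$ are handled consistently, exactly as in the proof of Theorem~\ref{mainTheorem2}.
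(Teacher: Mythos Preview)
Your proposal is correct and follows essentially the same route as the paper: the paper's proof simply invokes Lemma~\ref{SquierConfluenceLemma} to obtain a homotopy basis of $\tck\Sigma$ and then says ``following the proof of Theorem~\ref{mainTheorem2}, we transform it into a generating set for the natural system $\Pi(\Sigma)$''. Your two steps unpack precisely that sentence, and your remark that finiteness plays no role in the generation argument is the right way to bridge the gap between the hypotheses of Theorem~\ref{mainTheorem2} and those of the proposition.
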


\begin{proof}
By Squier's confluence lemma (Lemma~\ref{SquierConfluenceLemma}), the set of generating confluences of $\Sigma$ forms a homotopy basis of $\Sigma^\top$. Following the proof of Theorem~\ref{mainTheorem2}, we transform it into a generating set for the natural system $\Pi(\Sigma)$.
\end{proof}

\begin{example}\label{ExampleA2}
We consider the $2$-polygraph $\Sigma=(a_0,a_1,a_2)$ presenting the monoid $\As$. Here is a part of the free $2$-category $\Sigma^*$:
\[
\xymatrix@C=15pt{
{ a_1}
&& { a_1a_1}
	\ar@2[ll] _-{a_2} 
&& { a_1a_1a_1} 
	\ar@2@/_1pc/[ll] _-{a_2a_1}
	\ar@2@/^1pc/[ll] ^-{a_1a_2}
&&& { a_1a_1a_1a_1} 
	\ar@2[lll] _-{a_1a_2a_1}
	\ar@2@/_2pc/[lll] _-{a_2a_1a_1}
	\ar@2@/^2pc/[lll] ^-{a_1a_1a_2}
&&& { a_1a_1a_1a_1a_1}
	\ar@2@/_1pc/[lll] _-{a_1a_2a_1a_1}
	\ar@2@/^1pc/[lll] ^-{a_1a_1a_2a_1} 
	\ar@2@/_3pc/[lll] _-{a_2a_1a_1a_1}
	\ar@2@/^3pc/[lll] ^-{a_1a_1a_1a_2}
& (\cdots)
}
\] 
The $2$-polygraph $\Sigma$ is convergent and has exactly one generating confluence, written with both notations:
\[
\xymatrix@1{
a_2a_1 \star_1 a_2
	\ar@3[r] ^-{a_3}
& a_1a_2 \star_1 a_2
}
\qquad\qquad\qquad
\xymatrix@1{
{\twocell{(mu *0 1) *1 mu}} 
	\ar@3[r] ^-{\twocell{alpha}}
& {\twocell{(1 *0 mu) *1 mu}}
}
\]
Thus the natural system $\Pi(\Sigma)$ on the category $\cl{\Sigma}=\As$ is generated by following the element, where the last equality uses the exchange relation:
\[
\iar{ s(a_3)\star_1 t(a_3)^- } 
	\:=\: \iar{(a_2a_1\star_1 a_2) \star_1 (a_2^- \star_1 a_1a_2^-)}
	\:=\: \iar{a_2a_1\star_1 a_1a_2^-}
	\:=\: \iar{a_2a_2^-}.
\] 
\deftwocell[black]{delta : 1 -> 2}%
The graphical notations, where $\twocell{mu}^-$ is pictured as $\twocell{delta}$, make this last equality more clear:
\[
\iar{ s(\twocell{alpha}) \star_1 t(\twocell{alpha}) ^- }
	\:=\: \iar{ \twocell{(mu *0 1) *1 mu *1 delta *1 (1 *0 delta)} }
	\:=\: \iar{ \twocell{(mu *0 1) *1 (1 *0 delta)} }
	\:=\: \iar{ \twocell{mu}\twocell{delta} }
\]
One can prove the same result by a combinatorial analysis. Indeed, one can note that the minimal $2$-cells from $a_1^{n+1}$ to $a_1^n$ are the $a_1^ia_2a_1^{n-1-i}$, for $i$ in $\ens{0,\dots,n-1}$. Thus, the natural system $\Pi(\Sigma)$ is generated by the following elements, for $n\geq 2$ and $0\leq i<j\leq n-1$:
\[
\iar{g_{i,j}} \:=\: \iar{a_1^ia_2a_1^{n-i-1} \star_1 a_1^ja_2^-a_1^{n-j-1}}.
\]
Then, one uses the exchange relations to get:
\[
g_{i,j} \:=\: 
\begin{cases}
\: a_1^i (a_2a_1\star_1 a_1a_2^-) a_1^{n-i-1} &\text{if }j=i+1 \\
\: a_1^i a_2 a_1^{j-i-2} a_2^- a_1^{n-j-1} &\text{if }j>i+2. 
\end{cases}
\] 
Hence, if $j=i+1$, we have, using the relations defining $\Pi(\Sigma)$ and $\iar{a_1}=0$:
\[
\iar{g_{i,i+1}} \:=\: i\iar{a_1} + \iar{a_2a_1\star_1a_1a_2^-} + (n-i-1)\iar{a_1} \:=\: \iar{a_2a_2^-}.
\]
And, if $j>i+2$, we get:
\[
\iar{g_{i,j}} \:=\: i\iar{a_1} + \iar{a_2} + (j-i-2)\iar{a_1} -\iar{a_2} +(n-j-1)\iar{a_1} \:=\: 0. 
\]
Thus, the natural system $\Pi(\Sigma)$ is generated by one element: $\iar{a_2a_2^-}$.
\end{example}

%%%%%%%%%%%%%%%%%%%%%%%%%%%%%%%%%%%%%%%%%%%%%%%%%%%%%%%%
% Bibliographie
%%%%%%%%%%%%%%%%%%%%%%%%%%%%%%%%%%%%%%%%%%%%%%%%%%%%%%%%
\begin{small}
\renewcommand{\refname}{References}
\bibliographystyle{amsplain}
\bibliography{../bibliographie}
\end{small}

\end{document}